\documentclass[11pt]{article}
\usepackage{amssymb}
\usepackage{amsmath}
\usepackage[top=1.2in, bottom=1.4in, left=1in, right=1in]{geometry}
\pagestyle{plain}
\usepackage{parskip}
 \usepackage[T1]{fontenc}
\usepackage{lmodern}
 \usepackage{centernot}
\usepackage{epsfig}
\usepackage{xcolor}
\usepackage{amsmath,amsthm,amsfonts,amssymb,cite,amscd}

\usepackage{setspace}

\usepackage{fullpage}

\setlength{\parskip}{0.2\baselineskip}
\usepackage{indentfirst}
\setlength{\parindent}{2em}

\usepackage{mathrsfs}
\usepackage{amsthm}
\usepackage{tikz}
\usepackage{mathtools}
\usepackage{calc}
\usepackage[symbol]{footmisc}
\usetikzlibrary{patterns,arrows,decorations.pathreplacing}
\usepackage{authblk}
\usetikzlibrary{fadings}
\usepackage{pgfplots}
\usepackage{parskip}
\newtheorem{theorem}{Theorem} 
\newtheorem{lemma}[theorem]{Lemma}
 
\newtheorem{definition}{Definition}

\newtheorem{conjecture}{Conjecture}

\newcommand{\RNum}[1]{\uppercase\expandafter{\romannumeral #1\relax}}
\DeclareMathOperator{\ex}{ex}

\DeclareMathOperator{\ar}{ar}

\def\0{\emptyset}
\def\q{\hfill\rule{1ex}{1ex}}

\pgfplotsset{compat=1.16}
\setlength{\affilsep}{2em}

\usepackage{authblk}
\setlength{\affilsep}{2em}

\usetikzlibrary{patterns}

\title{ Tur\'an numbers and anti-Ramsey numbers for short cycles in complete $3$-partite graphs}
\author[1]{\hspace{1cm}Chunqiu Fang \thanks{chunqiu@ustc.edu.cn}}
\author[2,3]{Ervin Gy\H{o}ri \thanks{gyori.ervin@renyi.mta.hu}} 
\author[2,3]{Chuanqi Xiao \thanks{chuanqixm@gmail.com}}
\author[4]{Jimeng Xiao \thanks{xiaojimeng@mail.nwpu.edu.cn}}
\affil[1]{School of Mathematical Sciences, University of Science and Technology of China, Hefei}
\affil[2]{Alfr\'ed R\'enyi Institute of Mathematics, Budapest }
\affil[3]{Central European University, Budapest}
\affil[4]{School of Mathematics and Statistics, Northwestern Polytechnical University, Xi'an}
\date{}
%
\begin{document}
\maketitle
\begin{abstract}

We call a $4$-cycle in $K_{n_{1}, n_{2}, n_{3}}$ multipartite, denoted by $C_{4}^{\text{multi}}$, if it contains at least one vertex in each part of $K_{n_{1}, n_{2}, n_{3}}$. The Tur\'an number $\ex(K_{n_{1},n_{2},n_{3}}, C_{4}^{\text{multi}})$ \bigg( respectively,  $\ex(K_{n_{1},n_{2},n_{3}},\{C_{3}, C_{4}^{\text{multi}}\})$\bigg) is the maximum number of edges in a graph $G\subseteq K_{n_{1},n_{2},n_{3}}$ such that $G$ contains no $C_{4}^{\text{multi}}$ \bigg( respectively, $G$ contains neither $C_{3}$ nor $C_{4}^{\text{multi}}$\bigg). We call a $C^{multi}_4$ rainbow if all four edges of it have different colors. The anti-Ramsey number $\ar(K_{n_{1},n_{2},n_{3}}, C_{4}^{\text{multi}})$ is the maximum number of colors in an edge-colored of $K_{n_{1},n_{2},n_{3}}$ with no rainbow  $C_{4}^{\text{multi}}$. In this paper, we determine that $\ex(K_{n_{1},n_{2},n_{3}}, C_{4}^{\text{multi}})=n_{1}n_{2}+2n_{3}$ and
$\ar(K_{n_{1},n_{2},n_{3}}, C_{4}^{\text{multi}})=\ex(K_{n_{1},n_{2},n_{3}}, \{C_{3}, C_{4}^{\text{multi}}\})+1=n_{1}n_{2}+n_{3}+1,$ where $n_{1}\ge  n_{2}\ge n_{3}\ge 1.$
\end{abstract}
Keywords: Tur\'an numbers, anti-Ramsey numbers,  complete $3$-partite graphs, cycles.
~
\baselineskip=0.3in
\section{Introduction}

We consider only nonempty simple graphs. Let $G$ be such a graph, the vertex and edge set of $G$ is denoted by $V(G)$ and $E(G)$, the number of vertices and edges in $G$ by $\nu(G)$ and $e(G)$, respectively. We denote the neighborhood of $v$ in $G$ by $N_G(v)$, and the degree of a vertex $v$ in $G$ by $d_G(v)$, the size of $N_G(v)$. Let $U_1$, $U_2$ be vertex sets, denote by $e_{G}(U_1,U_2)$ the number of edges between $U_1$ and $U_2$ in $G$. We write $d(v)$ instead of $d_{G}(v)$, $N(v)$ instead of $N_{G}(v)$ and $e(U_1,U_2)$ instead of $e_G(U_1,U_2)$ if the underlying graph $G$ is clear. 

Given a graph family $\mathcal{F}$, we call a graph $H$ an $\mathcal{F}$-free graph, if $G$ contains no graph in $\mathcal{F}$ as a subgraph. The Tur\'an number $\ex(G, \mathcal{F})$ for a graph family $\mathcal{F}$ in $G$ is the maximum number of edges in a graph $H\subseteq G$ which is $\mathcal{F}$-free. If $\mathcal{F}=\{F\}$, then we denote $\ex(G,\mathcal{F})$ by $\ex(G,F)$.

An old result of Bollob\'as, Erd\H os and Szemer\'edi \cite{Bollobas1} showed that $\ex(K_{n_{1},n_{2},n_{3}},C_{3})=n_{1}n_{2}+n_{1}n_{3}$ for $n_{1}\ge n_{2}\ge n_{3}\ge 1$ (also see \cite{Bollobas2,Bennett,De}). Lv, Lu and Fang \cite{Lv1,Lv2} constructed balanced $3$-partite graphs which are $C_{4}$-free and $\{C_{3},C_{4}\}$-free respectively and showed that $\ex(K_{n,n,n}, C_{4})=(\frac{3}{\sqrt{2}}+o(1))n^{3/2}$ and $\ex(K_{n,n,n},\{C_{3},C_{4}\})\ge (\sqrt{3}+o(1))n^{3/2}.$ Recently, Fang, Gy\H ori, Li and Xiao \cite{Fang} showed that if $G\subseteq K_{n_{1}, n_{2}, \ldots, n_{r}}$ and $e(G)\ge f(n_{1},n_{2},\ldots,n_{r})+1$, then $G$ contains a multipartite cycle. Further more, they proposed the following conjecture.
\begin{conjecture} \cite{Fang}\label{conjecture1}
For $r\ge 3$ and $n_{1}\ge n_{2}\ge \cdots\ge n_{r}\ge 1$, if $G\subset K_{n_{1}, n_{2}, \ldots, n_{r}}$ and $e(G)\ge f(n_{1},n_{2},\ldots,n_{r})+1$, then $G$ contains a multipartite cycle of length no more than $\frac{3}{2}r$.
\end{conjecture}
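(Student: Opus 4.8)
The plan is to prove the contrapositive: if $G\subseteq K_{n_{1},\ldots,n_{r}}$ contains no multipartite cycle of length at most $\frac{3}{2}r$, then $e(G)\le f(n_{1},\ldots,n_{r})$. Since forbidding only the short multipartite cycles is a weaker hypothesis than forbidding all of them, the result of \cite{Fang} already yields the inequality in the opposite direction, so the real content is to show that tolerating long multipartite cycles buys no extra edges. I would attack this by induction on $r$. In the base case $r=3$ we have $\frac{3}{2}r=\frac{9}{2}$, so a multipartite cycle of length at most $\frac{3}{2}r$ is exactly a $C_{3}$ or a $C_{4}^{\text{multi}}$, and the statement asks that a $\{C_{3},C_{4}^{\text{multi}}\}$-free graph have at most $f(n_{1},n_{2},n_{3})$ edges. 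This should coincide with the determination of $\ex(K_{n_{1},n_{2},n_{3}},\{C_{3},C_{4}^{\text{multi}}\})$ carried out in this paper, provided one checks that $f(n_{1},n_{2},n_{3})=n_{1}n_{2}+n_{3}+1$; note that consistency of the conjecture at $r=3$ actually forces this identity, since no-multipartite-cycle is stronger than no-short-multipartite-cycle.

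For the inductive step, take $G$ with no multipartite cycle of length at most $\frac{3}{2}r$ and delete the smallest part $V_{r}$, obtaining $G'\subseteq K_{n_{1},\ldots,n_{r-1}}$. Any multipartite cycle of $G'$ touches at least three of the first $r-1$ parts, so it is still multipartite in $G$, and since $\frac{3}{2}(r-1)<\frac{3}{2}r$, the graph $G'$ inherits the hypothesis for $r-1$ parts; by induction $e(G')\le f(n_{1},\ldots,n_{r-1})$. It then remains to bound the edges incident to $V_{r}$ by the increment $f(n_{1},\ldots,n_{r})-f(n_{1},\ldots,n_{r-1})$. I would do this by analyzing how a vertex $v\in V_{r}$ attaches to $G'$: two neighbors of $v$ lying in distinct parts, or joined by a short path inside $G'$, close up a short multipartite cycle through $v$, so the no-short-cycle hypothesis should force each vertex of $V_{r}$ to have a severely restricted neighborhood, which is what the extremal count predicts.

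The step I expect to be the main obstacle is the simultaneous control of "touches at least three parts" and "length at most $\frac{3}{2}r$." A long multipartite cycle can meander through many parts with short bipartite detours, and one must convert the edge surplus into a cycle that is both multipartite and short; the factor $\frac{3}{2}$ is precisely the slack obtained by pigeonholing part-changes against total length, but making this quantitative when the parts are highly unbalanced (so that $V_{r}$ is tiny and almost all edges sit inside a single $K_{n_{i},n_{j}}$) seems delicate. A secondary difficulty is that the extremal configurations for $f$ need not be unique, so the induction may require a stability-type strengthening rather than the bare extremal count, in order to rule out near-extremal structures in $G'$ that $V_{r}$ could complete into exactly the long-but-not-short multipartite cycles we are trying to forbid. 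For these reasons I regard the general statement as genuinely open; the above is a strategy rather than a finished argument, with the $r=3$ instance established here serving as the firm base from which such an induction would have to be launched.
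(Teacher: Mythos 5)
Your bottom-line assessment agrees with the paper: this statement is a conjecture quoted from \cite{Fang}, the paper does not prove it for general $r$, and its only contribution to it is the remark that Theorem \ref{exC3C4} settles the case $r=3$. Your reduction of that case is the same as the paper's: in a $3$-partite graph every triangle is automatically multipartite, so a multipartite cycle of length at most $\tfrac{9}{2}$ is precisely a $C_3$ or a $C_4^{\text{multi}}$, and the $r=3$ instance of the conjecture is exactly the upper bound in Theorem \ref{exC3C4}. However, your ``consistency'' identity is off by one: plugging $r=3$ (odd) into the definition gives $f(n_1,n_2,n_3)=n_1n_2+n_3+\tfrac{3-1}{2}-1=n_1n_2+n_3$, which is precisely the value determined in Theorem \ref{exC3C4}; the quantity $n_1n_2+n_3+1$ that you claim is ``forced'' is the anti-Ramsey number of Theorem \ref{arC4}, not $f$. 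Consistency forces $f=n_1n_2+n_3$, and that is what holds.

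The concrete flaw is in your inductive step, and it is fatal rather than merely ``delicate.'' For odd $r$ the increment $f(n_1,\ldots,n_r)-f(n_1,\ldots,n_{r-1})$ equals $n_r$, so your plan requires the universal bound: every $G$ with no short multipartite cycle satisfies $e(V_r, V\setminus V_r)\le n_r$. This is false. Take $r=3$, $n_1=n_2=n_3=n$, join $V_1$ to $V_2$ by a perfect matching, and join $V_3$ to $V_1$ completely. Every vertex of $V_2$ has degree $1$, so no cycle meets $V_2$; all cycles lie between $V_1$ and $V_3$ and are therefore not multipartite. This graph contains no multipartite cycle of any length, yet $e(V_3,V_1)=n^2$, vastly exceeding $n_3=n$; its total edge count is $n^2+n=f(n,n,n)$, so it is in fact an alternative extremal configuration, and the same attachment pattern (join $V_r$ completely to $V_1$ while keeping $V_1$ nearly isolated in $G'$) defeats the step for every odd $r\ge 5$ as well. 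The point is that $e(G')$ and $e(V_r,\cdot)$ genuinely trade off against each other and cannot be bounded separately: a sparse $G'$ permits enormous attachment from $V_r$. Any workable argument must account for the two jointly --- this is exactly what the paper's Lemma \ref{lemma1} does in the one case it proves, charging missing edges between two parts to vertices of the third, and the paper's induction deletes carefully chosen vertices, never a whole part. So your instinct that the general statement is open is correct, but the route you sketch would not close even the first unproved case.
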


The definitions of the multipartite subgraphs and the function $f(n_{1},n_{2},\ldots,n_{r})$ are defined as follows.

\begin{definition} \cite{Fang}
Let $r\ge 3$ and $G$ be an $r$-partite graph with vertex partition $V_1,V_2,\dots,V_r$, we call a subgraph $H$ of $G$ {\em multipartite}, if there are at least three distinct parts $V_{i}, V_{j}, V_{k}$ such that  $V(H)\cap V_{i}\ne \0, V(H)\cap V_{j}\ne \0$ and $V(H)\cap V_{k}\ne \0$. In particular, we denote a multipartite $H$ by  $H^{\text{multi}}$ (see Figure \ref{exC4} for an example of a $C_4^{multi}$ in a $3$-partite graph).
\end{definition}

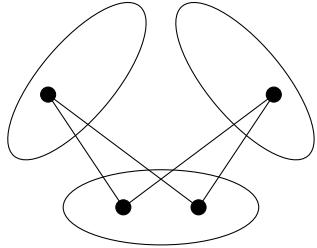
\begin{figure}[h]\label{exc4}
\begin{center}
\begin{tikzpicture}

\draw[rotate around={40:(-17,5)}] (-17,5) arc(0:360: .5cm and 1.3cm);
\draw[rotate around={140:(-20,5)}] (-20,5) arc(0:360: .5cm and 1.3cm);
\draw (-17.2,3) arc(0:360: 1.3cm and 0.5cm);

\draw [fill](-17,4.5)circle [radius = 0.1];
\draw [fill](-20,4.5)circle [radius = 0.1];
\draw [fill](-18,3)circle [radius = 0.1];
\draw [fill](-19,3)circle [radius = 0.1];

\draw  (-17,4.5) -- (-18,3);
\draw  (-17,4.5) -- (-19,3);
\draw  (-20,4.5) -- (-18,3);
\draw  (-20,4.5) -- (-19,3);

\end{tikzpicture}
\caption{A $C_4^{multi}$ in a $3$-partite graph.}
\end{center}
\end{figure}

For $r\ge 3$ and $n_{1}\ge n_{2}\ge \cdots\ge n_{r}\ge 1$, let
\begin{align}\nonumber
f(n_{1},n_{2},\ldots,n_{r})=
 \begin{cases}
 n_{1}n_{2}+n_{3}n_{4}+ \cdots+n_{r-2}n_{r-1}+n_{r}+\frac{r-1}{2}-1,& r\, \text{ is odd};\\
n_{1}n_{2}+n_{3}n_{4}+ \cdots+n_{r-1}n_{r}+\frac{r}{2}-1,& r \,\text{ is even}.
 \end{cases}
 \end{align}
 
 In this paper, we consider the Tur\'an numbers of $C_{4}^{\text{multi}}$ and $\{C_{3}, C_{4}^{\text{multi}}\}$ in the complete $3$-partite graphs and obtain the following two results.

\begin{theorem}\label{exC4}
For $n_{1}\ge n_{2}\ge n_{3}\ge 1$, $\ex(K_{n_{1},n_{2},n_{3}}, C_{4}^{\text{multi}})=n_{1}n_{2}+2n_{3}.$
\end{theorem}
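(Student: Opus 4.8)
The plan is to prove the two matching bounds separately. Write the three parts as $A,B,C$ with $|A|=n_1\ge|B|=n_2\ge|C|=n_3$, and for a subgraph $G$ let $e_{AB}=e_G(A,B)$ denote the number of $A$--$B$ edges, etc. The starting observation is a clean description of the forbidden configuration: since a $C_4$ has four vertices, a $C_4^{\text{multi}}$ must place two of them in one part and one in each of the other two, so that its two diagonally opposite vertices lie in a common part $X$ and its other two vertices lie in the two remaining parts. Equivalently, \emph{$G$ contains a $C_4^{\text{multi}}$ if and only if some pair of vertices in a common part has a common neighbor in each of the other two parts}. For the lower bound I will exhibit the graph $G_0$ consisting of the complete bipartite graph between $A$ and $B$ together with, for each $c\in C$, one edge to a private vertex of $A$ and one edge to a private vertex of $B$; this is possible because $n_3\le n_2\le n_1$, and it gives $e(G_0)=n_1n_2+2n_3$. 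The edges incident to $C$ form two matchings, so no two vertices of $A$ (or of $B$) have a common neighbor in $C$, and no two vertices of $C$ have a common neighbor in $A$ or in $B$; by the characterization above, $G_0$ is $C_4^{\text{multi}}$-free, establishing $\ex(K_{n_1,n_2,n_3},C_4^{\text{multi}})\ge n_1n_2+2n_3$.

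For the upper bound let $G$ be $C_4^{\text{multi}}$-free, and for $c\in C$ put $N_A(c)=N(c)\cap A$, $N_B(c)=N(c)\cap B$, $a_c=|N_A(c)|$ and $b_c=|N_B(c)|$. Since $e(G)=e_{AB}+\sum_{c}(a_c+b_c)$, it suffices to show $\sum_{c\in C}(a_c+b_c-2)\le n_1n_2-e_{AB}$, the number of non-edges between $A$ and $B$. The engine will be a rectangle lemma. Associate to each $c\in C$ the rectangle $R_c=N_A(c)\times N_B(c)\subseteq A\times B$. If distinct $c,c'$ shared a cell $(a,b)\in R_c\cap R_{c'}$, then $c$ and $c'$ would have the common neighbors $a\in A$ and $b\in B$, a $C_4^{\text{multi}}$ doubled in $C$; hence the $R_c$ are pairwise disjoint. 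Moreover, the edges of $G$ inside a single $R_c$ form a partial matching, since two of them in a common row $a$ would give two $B$-vertices with common neighbors $a$ and $c$, and two in a common column $b$ would give two $A$-vertices with common neighbors $b$ and $c$, each forbidden. Counting $A$--$B$ edges inside and outside the disjoint rectangles yields
\begin{equation*}
e_{AB}\le n_1n_2-\sum_{c\in C}\bigl(a_cb_c-\min(a_c,b_c)\bigr),
\end{equation*}
and because $a_cb_c-\min(a_c,b_c)\ge a_c+b_c-2$ whenever $a_c,b_c\ge 1$, every vertex of $C$ adjacent to \emph{both} $A$ and $B$ already contributes its full excess $a_c+b_c-2$ to the bound.

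The hard part will be the vertices of $C$ adjacent to only one side, say only to $A$ (the other case is symmetric): for these $R_c=\emptyset$, so the displayed inequality subtracts nothing, yet $a_c$ may be large and the excess $a_c-2$ positive. Here I plan to use the other half of the characterization: if $c$ is adjacent only to $A$, then any two neighbors $a,a'\in N_A(c)$ share the neighbor $c\in C$, hence can have no common neighbor in $B$, so the sets $\{N_B(a):a\in N_A(c)\}$ are pairwise disjoint and $\sum_{a\in N_A(c)}|N_B(a)|\le n_2$. This forces at least $(a_c-1)n_2\ge a_c-2$ non-edges between $N_A(c)$ and $B$, which is more than enough to pay for the excess. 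The genuine obstacle is to collect all these one-sided contributions as a family of non-edges that is simultaneously disjoint from every rectangle $R_{c'}$ and internally non-overlapping, since different one-sided stars may share vertices of $A$. I expect to resolve this in one of two ways: either by a compression/extremal reduction showing that a maximum $C_4^{\text{multi}}$-free graph may be taken with every vertex of $C$ adjacent to both $A$ and $B$ or else of degree at most $2$ (after which the displayed inequality finishes the proof at once), or by an explicit injective charging of excesses to non-edges whose global consistency is guaranteed by the pairwise-disjointness of the relevant $B$-neighborhoods. Making this assignment globally consistent is the crux, and it is where I anticipate the bulk of the technical effort.
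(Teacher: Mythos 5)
Your lower bound is the paper's own construction, and your rectangle argument is, almost verbatim, the paper's key lemma (Lemma 1(i)): the pairwise disjointness of the rectangles $R_c$ and the matching structure of the edges inside each $R_c$ are exactly the paper's double count of missing $A$--$B$ edges, and this correctly proves the bound in the special case where every vertex of $C$ has a neighbour in both $A$ and $B$. Up to that point your argument is sound.

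The gap is precisely the case you defer, namely vertices of $C$ adjacent to only one side, and this is not a routine detail but the actual content of the proof: neither of your two proposed repairs is carried out, and both face real obstructions. For the charging route, two one-sided vertices $c,c'$ may share arbitrarily many $A$-neighbours (a $C_4$ inside $A\cup C$ is not multipartite, so nothing forbids this), hence the non-edge sets $N_A(c)\times B$ you want to charge against overlap each other; they also overlap the rectangles $R_{c''}$ of two-sided vertices, and the column-type non-edges coming from vertices seeing only $B$ overlap the row-type ones coming from vertices seeing only $A$. No consistent global assignment is exhibited. The compression route (WLOG every $c\in C$ is two-sided or of degree at most $2$) is stated but not proved, and it is unclear how to rewire a high-degree one-sided star without either losing edges or creating a $C_4^{\text{multi}}$. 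The paper escapes this entirely by proving the upper bound by induction on $n_1+n_2+n_3$: if a suitably chosen part contains a low-degree vertex, that vertex is deleted and the induction hypothesis applies; otherwise every vertex of that part is forced by its degree to be two-sided, and the lemma is applied with that part --- not necessarily the smallest one --- playing the role of your $C$ (for instance, when $n_1>n_2=n_3=n$ the lemma is applied to $V_1$, giving $e(G)\le n^2+2n_1\le n_1n+2n$). The balanced case $n_1=n_2=n_3$ is the most delicate: there no single low-degree vertex need exist, and the paper instead deletes three vertices, one per part, chosen via the missing-adjacency structure, with a separate edge count (its Cases 1.1 and 1.2). Some argument of this global kind --- an induction with an adaptively chosen part, or a genuinely new charging scheme --- is what your proposal still lacks, and it is where the bulk of the paper's proof lives.
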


\begin{theorem}\label{exC3C4}
For $n_{1}\ge n_{2}\ge n_{3}\ge 1$, $\ex(K_{n_{1},n_{2},n_{3}}, \{C_{3}, C_{4}^{\text{multi}}\})=n_{1}n_{2}+n_{3}.$
\end{theorem}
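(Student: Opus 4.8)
The plan is to prove the matching lower and upper bounds separately, the upper bound being the substantial direction.

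For the lower bound I would take the complete bipartite graph between $V_1$ and $V_2$ (with $|V_i|=n_i$), which contributes $n_1n_2$ edges and is automatically $\{C_3,C_4^{\text{multi}}\}$-free since it meets only two parts, and then attach each of the $n_3$ vertices of $V_3$ to a single vertex of $V_1$. Every vertex of $V_3$ then has degree $1$, so it lies on no cycle and in no triangle; hence the resulting graph has $n_1n_2+n_3$ edges and is $\{C_3,C_4^{\text{multi}}\}$-free, giving $\ex(K_{n_1,n_2,n_3},\{C_3,C_4^{\text{multi}}\})\ge n_1n_2+n_3$.

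For the upper bound, let $G\subseteq K_{n_1,n_2,n_3}$ be $\{C_3,C_4^{\text{multi}}\}$-free with parts $V_1,V_2,V_3$. Write $e_{ij}$ for the number of edges between $V_i$ and $V_j$, and for $c\in V_3$ put $A_c=N(c)\cap V_1$, $B_c=N(c)\cap V_2$, $d_1(c)=|A_c|$, $d_2(c)=|B_c|$. Setting $m=n_1n_2-e_{12}$ (the number of non-edges between $V_1$ and $V_2$), the identity $e(G)=e_{12}+\sum_{c\in V_3}(d_1(c)+d_2(c))=n_1n_2-m+\sum_{c\in V_3}d(c)$ shows that the theorem is equivalent to $\sum_{c\in V_3}(d(c)-1)\le m$. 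The key structural input is triangle-freeness, which forces $A_c\times B_c$ to consist entirely of non-edges of $K_{V_1,V_2}$ (an edge $ab$ with $a\in A_c,b\in B_c$ would close a triangle with $c$), together with the three incarnations of $C_4^{\text{multi}}$-freeness. I would first record the consequence obtained from a fixed $b\in V_2$: its $V_3$-neighbours $c$ have pairwise disjoint sets $A_c$ (a shared vertex of $V_1$ would give a $C_4^{\text{multi}}$ through two $V_3$-vertices), and each such $A_c$ avoids $N(b)\cap V_1$ by triangle-freeness, so $\sum_{c:\,b\in B_c}d_1(c)\le n_1-|N(b)\cap V_1|$. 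Summing over $b\in V_2$ yields the central inequality $\sum_{c\in V_3}d_1(c)\,d_2(c)\le m$. Since $d_1(c)d_2(c)\ge d_1(c)+d_2(c)-1=d(c)-1$ whenever $d_1(c),d_2(c)\ge 1$, this already gives $\sum_c(d(c)-1)\le m$, and hence the theorem, in the case where every vertex of $V_3$ of degree $\ge 2$ has neighbours in both $V_1$ and $V_2$.

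The main obstacle is therefore the \emph{one-sided} vertices of $V_3$, i.e.\ those of degree $\ge 2$ adjacent to only one of $V_1,V_2$, for which the product $d_1(c)d_2(c)$ vanishes and the bound above says nothing; this is exactly where the hypothesis $n_1\ge n_2\ge n_3$ must enter. Here I would exploit the remaining form of $C_4^{\text{multi}}$-freeness: if $c$ is adjacent only to $A_c\subseteq V_1$ with $|A_c|\ge 2$, then any two vertices of $A_c$ have no common neighbour in $V_2$ (else a $C_4^{\text{multi}}$ through two $V_1$-vertices), so the $V_2$-neighbourhoods of the vertices of $A_c$ are pairwise disjoint and $\sum_{a\in A_c}\big(n_2-|N(a)\cap V_2|\big)\ge (d_1(c)-1)n_2$; that is, such a vertex forces at least $(d_1(c)-1)n_2$ non-edges in the rows $A_c$ (symmetrically, a vertex adjacent only to $V_2$ forces $(d_2(c)-1)n_1$ non-edges). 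Averaging this over all of $V_3$ and using that each $a\in V_1$ has at most $n_3\le n_2$ neighbours in $V_3$ converts the factor $n_2$ back into the desired count of non-edges. The delicate point — and the step I expect to require the most care — is to combine the one-sided contribution with the two-sided contribution without double-counting the common pool $m$ of $V_1$–$V_2$ non-edges. I would organise this by induction on $n_3$: if some $c\in V_3$ has degree $\le 1$, deleting it removes at most one edge while lowering the target bound $n_1n_2+n_3$ by exactly one, so the inductive hypothesis on $K_{n_1,n_2,n_3-1}$ disposes of that case and reduces matters to all degrees being $\ge 2$; one then splits according to whether one-sided vertices are present, invoking the box inequality $\sum_c d_1(c)d_2(c)\le m$ in the two-sided regime and the disjoint-neighbourhood count in the one-sided regime, with the chain $n_1\ge n_2\ge n_3$ ensuring that the two estimates are compatible.
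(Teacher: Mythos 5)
Your lower bound is valid (each vertex of $V_3$ has degree one, so it lies on no cycle), your reduction of the theorem to $\sum_{c\in V_3}(d(c)-1)\le m$ is correct, and your central inequality $\sum_{c\in V_3}d_1(c)d_2(c)\le m$ is also correct; in fact it is exactly the counting in the paper's Lemma \ref{lemma1}(ii) with $X=V_3$, $Y=V_1$, $Z=V_2$ (the paper counts the missing $V_1$--$V_2$ edges directly, using that the boxes $A_c\times B_c$ consist of non-edges by triangle-freeness and are pairwise disjoint by $C_4^{\text{multi}}$-freeness; your double count over $b\in V_2$ is the same estimate). So the two-sided regime and the removal of degree-$\le 1$ vertices of $V_3$ are sound.

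The gap is precisely the step you flag as delicate, and your sketch does not close it. ``Splitting according to whether one-sided vertices are present'' is not a case distinction: a single graph can simultaneously contain two-sided vertices, vertices attached only to $V_1$, and vertices attached only to $V_2$, and the three families of forced non-edges (boxes $A_{c'}\times B_{c'}$, rows indexed by $A_c$, columns indexed by $B_{c''}$) genuinely overlap. For instance, a $V_1$-sided vertex $c$ and a two-sided vertex $c'$ may share one neighbour $a\in V_1$, in which case the entire box of $c'$ lies inside the rows charged to $c$; likewise rows of $V_1$-sided vertices meet columns of $V_2$-sided vertices, and the row sets $A_c$, $A_{c'}$ of two $V_1$-sided vertices may coincide (a $C_4$ inside $V_1\cup V_3$ is not multipartite, hence allowed). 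Your multiplicity idea does work within one regime --- each row non-edge is charged by at most $|N(a)\cap V_3|\le n_3\le n_2$ one-sided vertices, cancelling the factor $n_2$ --- but it yields three separate bounds, each against the same pool $m$, and combining them naively gives only $\sum_c(d(c)-1)\le 3m$. Nothing in the chain $n_1\ge n_2\ge n_3$ repairs this triple counting, and induction on $n_3$ alone cannot escape it either: a one-sided vertex with $d(c)\ge 2$ cannot be deleted, since that removes $d(c)$ edges while lowering the target by only $1$. The paper circumvents (rather than solves) this overlap problem: its Lemma \ref{lemma1} carries the hypothesis that \emph{every} vertex of the indexing part is two-sided, and the main induction is on $n_1+n_2+n_3$, deleting low-degree vertices from $V_1$ or $V_2$ (Cases 2 and 3) or a transversal triple in the balanced case (Cases 1.1 and 1.2), so that the box argument is always applied with an indexing part whose vertices are all two-sided --- and that part is typically $V_1$ or $V_2$, not $V_3$. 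To complete your argument you would need either this flexibility in the choice of indexing part together with deletions from all three parts, or a genuinely new way to merge the three overlapping charge families; as written, the proof is incomplete at its crux.
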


Notice that Theorem \ref{exC3C4} confirms Conjecture \ref{conjecture1} for the case when $r=3$. 

A subgraph of an edge-colored graph is \textit{rainbow}, if all of its edges have different colors. For graphs $G$ and $H$, the anti-Ramsey number $\ar(G,H)$ is the maximum number of colors in an edge-colored $G$ with no rainbow copy of $H$. Erd\H os, Simonovits and S\'{o}s \cite{Erdos}  first studied the anti-Ramsey number in the case when the host graph $G$ is a complete graph $K_{n}$ and showed the close relationship between it and the Tur\'an number. In this paper, we consider the anti-Ramsey number of $C_{4}^{\text{multi}}$ in the complete $3$-partite graphs. 


\begin{theorem}\label{arC4}
For $n_{1}\ge n_{2}\ge n_{3}\ge 1$, $\ar(K_{n_{1},n_{2},n_{3}}, C_{4}^{\text{multi}})=n_{1}n_{2}+n_{3}+1$.

\end{theorem}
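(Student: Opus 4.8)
The plan is to prove matching lower and upper bounds, $\ar(K_{n_1,n_2,n_3},C_4^{\text{multi}})\ge n_1n_2+n_3+1$ and $\le n_1n_2+n_3+1$. For the lower bound I would exhibit an explicit colouring. Fix $w_0\in V_3$, give every edge between $V_1$ and $V_2$ its own private colour ($n_1n_2$ colours), colour all edges incident to each $w\in V_3\setminus\{w_0\}$ with a single fresh colour $\gamma_w$ ($n_3-1$ colours), and colour all edges from $w_0$ to $V_1$ with one new colour and all edges from $w_0$ to $V_2$ with another ($2$ colours); this uses $n_1n_2+(n_3-1)+2=n_1n_2+n_3+1$ colours. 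To see there is no rainbow $C_4^{\text{multi}}$, note that a multipartite $4$-cycle meets all three parts, so one part carries two of its cyclically opposite vertices, giving three cases. In the two cases where a single $V_3$-vertex $c$ lies on the cycle, its two cycle-edges go to the same part, hence share a colour (equal to $\gamma_c$, or to $w_0$'s colour for that part); in the case of two $V_3$-vertices $c,c'$, at least one of them differs from $w_0$ and its two cycle-edges then share its colour $\gamma$. So every multipartite $4$-cycle repeats a colour.

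For the upper bound, let $c$ be any colouring with no rainbow $C_4^{\text{multi}}$ and $k$ colours. Picking one edge of each colour yields a rainbow subgraph $R$ with $e(R)=k$ that contains no $C_4^{\text{multi}}$, so Theorem~\ref{exC4} gives $k\le n_1n_2+2n_3$; this is not tight, and the point is to squeeze out the extra $n_3-1$ from the full no-rainbow hypothesis rather than from mere $C_4^{\text{multi}}$-freeness. I would split the colours into $\mathcal C_{12}$, those appearing on some edge between $V_1$ and $V_2$, and $\mathcal C_{3}$, those appearing only on edges incident to $V_3$, so that $k=|\mathcal C_{12}|+|\mathcal C_3|$ with $|\mathcal C_{12}|\le n_1n_2$, and it remains to prove $|\mathcal C_{12}|+|\mathcal C_3|\le n_1n_2+n_3+1$. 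Call a colour of $\mathcal C_3$ \emph{private to} $w\in V_3$ if it occurs on an edge at $w$, and analyse how two distinct private colours $\gamma_1,\gamma_2$ can meet at one $w$. If they sit on edges $wa_1,wa_2$ with $a_1,a_2\in V_1$, then for every $b\in V_2$ the $4$-cycle $a_1ba_2w$ has colour multiset $\{c(a_1b),c(a_2b),\gamma_1,\gamma_2\}$ with $\gamma_1,\gamma_2\in\mathcal C_3\setminus\mathcal C_{12}$; avoiding a rainbow forces $c(a_1b)=c(a_2b)$ for all $b$, so $a_1$ and $a_2$ colour $V_2$ identically and the edges at $a_2$ contribute nothing new to $\mathcal C_{12}$. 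Symmetrically for two private edges into $V_2$. Thus every \emph{same-part} coincidence of private colours is paid for by a lost colour of $\mathcal C_{12}$ and is neutral for the sum.

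The only way to gain a genuinely new colour is therefore a \emph{split} vertex $w\in V_3$ carrying two private colours on edges leaving it to \emph{different} parts, and two split vertices $w,w'$ would, via the type-C cycle $w\,a\,w'\,b\,w$ with $a\in V_1,b\in V_2$, exhibit four distinct colours, i.e.\ a rainbow $C_4^{\text{multi}}$; hence at most one split vertex exists, contributing a single extra colour. Converting this clean dichotomy into the exact inequality is the crux and the step I expect to be hardest: a private colour may occur at several vertices of $V_3$, and the type-C cycles couple those vertices simultaneously, so the bookkeeping must assign each colour a unique owner and track the $\mathcal C_{12}$-losses without double counting while isolating the unique unit of slack. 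A cleaner packaging, matching $\ar=\ex(K_{n_1,n_2,n_3},\{C_3,C_4^{\text{multi}}\})+1$, is to try to choose $R$ to be $C_3$-free, in which case Theorem~\ref{exC3C4} gives $k\le n_1n_2+n_3$ immediately and one need only show that an unavoidable multipartite triangle in $R$ can force at most one further colour before a rainbow $C_4^{\text{multi}}$ appears.
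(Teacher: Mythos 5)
Your lower-bound colouring is correct (it is a minor variant of the paper's construction, with $w_0$ playing the role of the specially treated part), but the upper bound --- which is the substance of the theorem --- has a genuine gap, and the central structural claim it rests on is false. You assert that at most one ``split'' vertex of $V_3$ can exist, i.e.\ at most one $w\in V_3$ carrying two distinct private colours towards different parts, arguing that two split vertices $w,w'$ would yield a rainbow cycle $w\,a\,w'\,b\,w$. This fails for exactly the reason you yourself flag: a private colour need not be owned by a single vertex of $V_3$. The paper's extremal colouring is a counterexample to your dichotomy: colour all $V_1$--$V_2$ edges rainbow, give each $v\in V_3$ its own colour $\gamma_v$ on all edges from $v$ to $V_1$, and colour \emph{all} edges between $V_2$ and $V_3$ with one common colour $\delta$. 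Then every $w\in V_3$ is split in your sense (it carries $\gamma_w$ towards $V_1$ and $\delta$ towards $V_2$, two distinct colours of $\mathcal{C}_3$), so there are $n_3\ge 2$ split vertices, yet there is no rainbow $C_4^{\text{multi}}$: in any cycle $w\,a\,w'\,b\,w$ the two edges at $b$ both have colour $\delta$. So the ``clean dichotomy'' that was supposed to isolate the single unit of slack is not true; the $+1$ in the bound comes from colours shared across many vertices of $V_3$, which a vertex-by-vertex accounting cannot see. Since you explicitly defer the conversion of the dichotomy into the final inequality (calling it the crux), and the dichotomy itself is wrong, the proposed upper-bound argument collapses.

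Your fallback --- choosing a representing subgraph $R$ that is $C_3$-free so that Theorem \ref{exC3C4} applies directly --- is also unjustified: nothing guarantees that a triangle-free representing subgraph exists, and handling the unavoidable triangles is precisely where the work lies. For comparison, the paper proves the upper bound by induction on $n_1+n_2+n_3$: it takes an arbitrary representing subgraph $G$ with $n_1n_2+n_3+2$ edges, uses Theorem \ref{exC3C4} to find triangles, shows that two of them must be vertex-disjoint (Claim 1), and then combines edge-counting claims (Claims 2--5) with a catalogue $\mathcal{F}$ of eight small graphs (Lemma \ref{lemma2}) whose rainbow occurrence always forces a rainbow $C_4^{\text{multi}}$, reaching a contradiction. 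Some mechanism of this kind --- induction plus structural analysis around two disjoint triangles, or at least a counting scheme indexed by colours rather than by vertices of $V_3$ --- is needed to replace your split-vertex dichotomy.
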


We prove Theorems \ref{exC4} and \ref{exC3C4} in Section 2 and Theorem \ref{arC4} in Section 3, respectively. We always denote the vertex partition of $K_{n_{1},n_{2},n_{3}}$ by $V_{1}, V_{2}$ and $V_{3}$, where $|V_{i}|=n_{i}$, $1\le i\le 3$.

\section{The Tur\'an numbers of $C_{4}^{\text{multi}}$ and $\{C_{3},C_{4}^{\text{multi}}\}$}

In this section,we first give the following lemma which will play an important role in our proof.

\begin{lemma} \label{lemma1}

Let $G$ be a $3$-partite graph with vertex partition $X, Y$ and $Z$, such that for all  $x\in X$, $N(x)\cap Y\ne \0$ and $N(x)\cap Z\ne \0$.

(i) If $G$ is $C^{\text{multi}}_{4}$-free, then $e(G)\le |Y||Z|+2|X|$;

(ii) If $G$ is $\{C_{3}, C^{\text{multi}}_{4}\}$-free, then $e(G)\le |Y||Z|+|X|$.

\end{lemma}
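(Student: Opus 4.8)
The plan is to split $e(G)=e(Y,Z)+e(X,Y)+e(X,Z)$ and to bound the two ``cross'' terms against the $|Y||Z|$ available pairs in $Y\times Z$, spending $2|X|$ extra slots for (i) and $|X|$ extra slots for (ii). For each $x\in X$ I write $A_x=N(x)\cap Y$ and $B_x=N(x)\cap Z$ and set $a_x=|A_x|\ge 1$, $b_x=|B_x|\ge 1$, where positivity is exactly the hypothesis on $X$. I call a pair $(y,z)\in A_x\times B_x$ a \emph{corner of $x$}, so $x$ has $a_xb_x$ corners. The whole argument views the corners as a reservoir sitting inside $Y\times Z$ into which I inject the ``surplus'' edges at $x$ — all but one $X$--$Y$ edge and all but one $X$--$Z$ edge for (i), all but one incident edge total for (ii) — and then compares this reservoir with the set of genuine $Y$--$Z$ edges.

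Two structural facts come from forbidding $C_4^{\text{multi}}$. First, corners of distinct vertices $x\ne x'$ are disjoint: a common corner $(y,z)$ would give $y\in A_x\cap A_{x'}$ and $z\in B_x\cap B_{x'}$, hence the $4$-cycle $x\,y\,x'\,z\,x$ meeting all three parts, a $C_4^{\text{multi}}$. Thus the corner sets are pairwise disjoint subsets of $Y\times Z$. Second, for a fixed $x$ and fixed $y\in A_x$, at most one $z\in B_x$ can satisfy $yz\in E(G)$, since two such would produce the $4$-cycle $x\,z_1\,y\,z_2\,x$, again a $C_4^{\text{multi}}$. Hence at most $a_x$ corners of $x$ are actual $Y$--$Z$ edges, and symmetrically at most $b_x$ are, so at most $\min(a_x,b_x)$ corners of $x$ lie in $E(Y,Z)$.

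For part (ii) the $C_3$-freeness makes this clean: a corner lying in $E(Y,Z)$ would create a triangle $x\,y\,z$, so no corner is an edge and all $a_xb_x$ corners are genuinely free pairs. Since $a_xb_x\ge a_x+b_x-1$ (equivalently $(a_x-1)(b_x-1)\ge 0$), I can injectively place the $a_x+b_x-1$ surplus edges of $x$ into its corners; disjointness across $x$ and from $E(Y,Z)$ then yields $\sum_x(a_x+b_x-1)\le |Y||Z|-e(Y,Z)$, which rearranges to $e(G)\le |Y||Z|+|X|$.

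Part (i) is where the real obstacle sits: triangles are now permitted, so a corner of $x$ may coincide with a $Y$--$Z$ edge and cannot be reused as a free slot. This is exactly what the second structural fact repairs: each $x$ still has at least $a_xb_x-\min(a_x,b_x)$ corners that are non-edges, and the elementary inequality $a_xb_x-\min(a_x,b_x)\ge a_x+b_x-2$ (i.e.\ $(a_x-1)(b_x-2)\ge 0$ after assuming $a_x\le b_x$) shows this reservoir suffices to absorb the $a_x+b_x-2$ surplus edges of $x$. Injecting them into the non-edge corners, disjoint across $x$ and from $E(Y,Z)$, gives $\sum_x(a_x+b_x-2)\le |Y||Z|-e(Y,Z)$, i.e.\ $e(G)\le |Y||Z|+2|X|$, as claimed. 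The only bookkeeping point requiring care is that the one reserved edge (resp.\ the two reserved edges) at each $x$ actually exist, which is guaranteed precisely by $A_x\ne\emptyset$ and $B_x\ne\emptyset$.
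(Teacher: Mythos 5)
Your proposal is correct and follows essentially the same argument as the paper: the disjointness of the "corner" sets $A_x\times B_x$ across distinct $x\in X$ (forced by $C_4^{\text{multi}}$-freeness), the bound of $\min(a_x,b_x)$ (resp.\ $0$) on corners that are actual $Y$--$Z$ edges, and the elementary inequalities $a_xb_x-\min(a_x,b_x)\ge a_x+b_x-2$ and $a_xb_x\ge a_x+b_x-1$ are precisely the paper's inequalities (2.1)--(2.6), with your "reservoir/injection" language being a repackaging of the paper's count of missing edges between $Y$ and $Z$.
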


\begin{proof}
(i) Since $G$ is $C_{4}^{\text{multi}}$-free, $G[N(x)]$ is $K_{1,2}$-free for each $x\in X$. Therefore,
\begin{align}\label{2.1}
e(G[N(x)])=e\bigg(N(x)\cap Y, N(x)\cap Z\bigg)\leq \min\bigg\{|N(x)\cap Y|, |N(x)\cap Z|\bigg\}.
\end{align}

For $x\in X$, we let $e_{x}$ be the number of missing edges of $G$ between $N(x)\cap Y$ and $N(x)\cap Z$. By (\ref{2.1}), we have
\begin{align}\label{2.2}
e_{x}&=|N(x)\cap Y|\cdot|N(x)\cap Z|-e\bigg(N(x)\cap Y, N(x)\cap Z\bigg)\nonumber\\
&\geq|N(x)\cap Y|\cdot|N(x)\cap Z|-\min\bigg\{|N(x)\cap Y|, |N(x)\cap Z|\bigg\}\\
&\geq|N(x)\cap Y|+|N(x)\cap Z|-2,\nonumber
\end{align}
where the last inequality holds since $|N(x)\cap Y|\geq 1$ and $|N(x)\cap Z|\geq 1$ for all $x\in X$.

By (\ref{2.2}), we get
\begin{align}\label{2.3}
 \sum_{x\in X}e_{x}\geq \sum_{x\in X}\bigg(|N(x)\cap Y|+|N(x)\cap Z|-2\bigg)=e(X,Y)+e(X, Z)-2|X|.
 \end{align}
 Notice that for any two distinct vertices $x_{1}, x_{2}\in X$,
they can not have common neighboors in both $Y$ and $Z$ at the same time, 
otherwise we find a copy of $C^{multi}_4$ in $G$. Thus each missing edge between $Y$ and $Z$ be calculated at most once in the summation $\sum_{x\in X}e_{x}$. Hence the number of missing edges between $Y$ and $Z$ is at least $\sum_{x\in X}e_{x}$. Then we have
\begin{align}\label{2.4}
e(Y,Z)\le |Y||Z|-\sum_{x\in X}e_{x}\le |Y||Z| - (e(X,Y)+e(X, Z)-2|X|).
\end{align}
By (\ref{2.4}), we get
\[\begin{split}
e(G)&=e(X,Y)+e(X, Z)+e(Y, Z)\le |Y||Z|+2|X|.
\end{split}\]

(ii) Since $G$ is $C_{3}$-free, for each $x\in X$,
\begin{align}\label{2.5}
e\bigg(N(x)\cap Y, N(x)\cap Z\bigg)=0.
\end{align}
 Since for each $x\in X$, $|N(x)\cap Y|\geq 1$ and $|N(x)\cap Z|\geq 1$ hold, by (\ref{2.5}), the number of missing edges between $N(x)\cap Y$ and $N(x)\cap Z$ is $|N(x)\cap Y|\cdot|N(x)\cap Z|$.
Notice that for any two distinct vertices $x_{1}, x_{2}\in X$, they cannot have common neighboors in both $Y$ and $Z$ at the same time, otherwise we find a copy of $C^{multi}_4$ in $G$. Hence,  the number of missing edges between $Y$ and $Z$ is at least $\sum_{x\in X}|N(x)\cap Y|\cdot|N(x)\cap Z|$. Thus,
\begin{align}\label{2.6}
e(Y, Z)&\le |Y||Z|-\sum_{x\in X}|N(x)\cap Y|\cdot|N(x)\cap Z|\nonumber\\
&\le |Y||Z|-\sum_{x\in X}(|N(x)\cap Y|+|N(x)\cap Z|-1)\\
&=|Y||Z|+|X|-e(X, Y)-e(X,Z),\nonumber
\end{align}
the second inequality holds since $|N(x)\cap Y|\geq 1$ and $|N(x)\cap Z|\geq 1$ for $x\in X$.

By (\ref{2.6}), we have $e(G)=e(Y, Z)+e(X, Y)+e(X,Z)\le |Y||Z|+|X|.$
\end{proof}

Now we are able to  prove Theorems \ref{exC4} and \ref{exC3C4}.

\begin{proof}[Proof of Theorem \ref{exC4}]
Let $G\subseteq K_{n_1,n_2,n_3}$ be a graph, such that $V_1$ and $V_2$ are completely joined, $V_1$ (respectively, $V_2$) and $V_3$ are joined by an $n_3$-matching, see Figure \ref{l1}. Clearly, $G$ is $C^{multi}_4$-free and $e(G)=n_1n_2+2n_3$. Therefore, $\ex(K_{n_1,n_2,n_3}, C^{multi}_4)\geq n_1n_2+2n_3$.

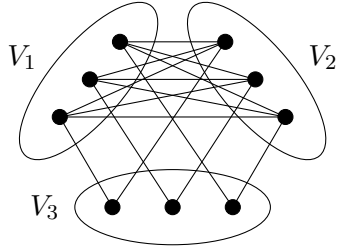
\begin{figure}[h]
\begin{center}
\begin{minipage}{5cm}
\begin{tikzpicture}
\draw[rotate around={40:(-12,-5)}] (-12,-5) arc(0:360: .5cm and 1.3cm);
\draw[rotate around={140:(-15,-5)}] (-15,-5) arc(0:360: .5cm and 1.3cm);
\draw (-12.2,-7) arc(0:360: 1.3cm and 0.5cm);

\draw [fill](-12,-5.8)circle [radius = 0.1];
\draw [fill](-12.4,-5.3)circle [radius = 0.1];
\draw [fill](-12.8,-4.8)circle [radius = 0.1];
\draw [fill](-15,-5.8)circle [radius = 0.1];
\draw [fill](-14.6,-5.3)circle [radius = 0.1];
\draw [fill](-14.2,-4.8)circle [radius = 0.1];
\draw [fill](-12.7,-7)circle [radius = 0.1];
\draw [fill](-13.5,-7)circle [radius = 0.1];
\draw [fill](-14.3,-7)circle [radius = 0.1];

\node at(-15.5, -5){$V_1$};
\node at(-11.5, -5){$V_2$};
\node at(-15.2, -7){$V_3$};

\draw  (-12.8,-4.8) -- (-14.2,-4.8);
\draw  (-12.8,-4.8) -- (-15,-5.8);
\draw  (-12.8,-4.8) -- (-14.6,-5.3);

\draw  (-12,-5.8) -- (-15,-5.8);
\draw  (-12,-5.8) -- (-14.2,-4.8);
\draw  (-12,-5.8) -- (-14.6,-5.3);

\draw  (-12.4,-5.3) -- (-14.2,-4.8);
\draw  (-12.4,-5.3) -- (-15,-5.8);
\draw  (-12.4,-5.3) -- (-14.6,-5.3);

\draw  (-14.3,-7) -- (-12.8,-4.8);
\draw  (-14.3,-7) -- (-15,-5.8);
\draw  (-13.5,-7) -- (-12.4,-5.3);
\draw  (-13.5,-7) -- (-14.6,-5.3);
\draw  (-12.7,-7) -- (-14.2,-4.8);
\draw  (-12.7,-7) -- (-12,-5.8);

\end{tikzpicture}
\end{minipage}

\caption{An example of $C^{multi}_4$-free graph with $n_1n_2+2n_3$ edges.}
\end{center}\label{l1}
\end{figure}

Let $G\subseteq K_{n_{1},n_{2},n_{3}}$ such that $G$ is $C^{\text{multi}}_{4}$-free, now we are going to prove that $e(G)\le n_{1}n_{2}+2n_{3}$ by induction on $n_{1}+n_{2}+n_{3}$. 

For the base case $n_{3}=1$, let $V_{3}=\{v\}$, we consider the following four subcases: \\
$(i)$ $N(v)\cap V_{1}\ne \0$ and $N(v)\cap V_{2}\ne \0$, then by Lemma \ref{lemma1}, we have $e(G)\le n_{1}n_{2}+2$. \\
$(ii)$ $N(v)\cap V_{1}\ne \0$ and $N_{G}(v)\cap V_{2}= \0$,  then
\[\begin{split}e(G)&=e(V_{3}, N(v))+e(V_{2}, N(v))+e(V_{1}\setminus N(v), V_{2})\\
&\le d(v)+n_{2}+\bigg(n_{1}-d(v)\bigg)n_{2}\\
&\le n_{1}n_{2}+1.
\end{split}\]
$(iii)$ $N(v)\cap V_{1}= \0$ and $N(v)\cap V_{2}\ne \0$,  then
\[\begin{split}e(G)&=e(V_{3}, N(v))+e(V_{1}, N(v))+e(V_{2}\setminus N(v), V_{1})\\
&\le d(v)+n_{1}+(n_{2}-d(v))n_{1}\\
&\le n_{1}n_{2}+1.
\end{split}\]
$(iv)$ $N(v)\cap V_{1}= \0$ and $N(v)\cap V_{2}= \0$, then $e(G)= e(V_{1}, V_{2})\le n_{1}n_{2}.$

Now let $n_{3}\ge 2$, and assume that the conclusion is true for order less than $n_{1}+n_{2}+n_{3}.$ We consider the following three cases.

{\bf Case 1.} $n_{1}=n_{2}=n_{3}=n\ge 2.$

If there exists one part, say $V_{1}$, such that $N(v)\cap V_{2}\ne\0$ and $N(v)\cap V_{3}\ne\0$, for all $v\in V_{1}$, then by Lemma \ref{lemma1}, we have $e(G)\le |V_{2}||V_{3}|+2|V_{1}|=n^{2}+2n.$

Thus, we may assume that for all $i\in [3]$, there exist a vertex $v\in V_{i}$ and $j\in [3]\setminus\{i\}$ such that $N(v)\cap V_{j}=\0.$ We separate it into two subcases.

{\bf Case 1.1.} There exist two parts, say $V_{1}$ and $V_{2}$, such that $N(v_{1})\cap V_{2}=\0$ and $N(v_{2})\cap V_{1}=\0$ for some vertices $v_{1}\in V_{1}$ and $v_{2}\in V_{2}.$

Since $G$ is $C_{4}^{\text{multi}}$-free, $d(v_{1})+d(v_{2})\le |V_{3}|+1=n+1.$ Without loss of generality, let $v_{3}\in V_{3}$ be the vertex such that $N(v_{3})\cap V_{1}=\0$. Then the number of edges incident with $\{v_{1}, v_{2}, v_{3}\}$ in $G$ is at most $d(v_{1})+d(v_{2})+n-1\le 2n.$ By the induction hypothesis, $e(G-\{v_{1}, v_{2}, v_{3}\})\le (n-1)^{2}+2(n-1)$. Thus, $e(G)\le (n-1)^{2}+2(n-1)+2n\le n^{2}+2n.$

{\bf Case 1.2.} There exist vertices $v_{1}\in V_{1}, v_{2}\in V_{2}$ and $v_{3}\in V_{3}$ such that either $N(v_{1})\cap V_{2}=\0, N(v_{2})\cap V_{3}=\0, N(v_{3})\cap V_{1}=\0$ or $N(v_{1})\cap V_{3}=\0, N(v_{3})\cap V_{2}=\0, N(v_{2})\cap V_{1}=\0$ holds.

Without loss of generality, we assume that $N(v_{1})\cap V_{2}=\0, N(v_{2})\cap V_{3}=\0, N(v_{3})\cap V_{1}=\0$. If $d(v_{1})+d(v_{2})+d(v_{3})\le 2n+1$, then by the induction hypothesis, we have
\[\begin{split}
e(G)&\le e(G-\{v_{1}, v_{2}, v_{3}\})+d(v_{1})+d(v_{2})+d(v_{3})\\
&\le (n-1)^{2}+2(n-1)+2n+1\\
&\le n^{2}+2n.
\end{split}\]
Now we assume that $d(v_{1})+d(v_{2})+d(v_{3})\ge 2n+2$, hence, $d(v_{1})\ge 1, d(v_{2})\ge 1, d(v_{3})\ge 1$. Since $G$ is $C^{multi}_4$-free, each vertex in $V_1\setminus \{v_1\}$ can have at most one neighbour in $N(v_3)$, we have $e(V_1\setminus \{v_1\},N(v_3))\leq n-1$. Similarly, we have $e(V_{3}\setminus\{v_{3}\}, N(v_{2}))\leq n-1$ and $e(V_{2}\setminus\{v_{2}\}, N(v_{1}))\leq n-1$.

Therefore,
$$e(V_{1}, V_{2})= e(V_{1}\setminus\{v_{1}\}, V_{2}\setminus N(v_{3}))+e(V_{1}\setminus\{v_{1}\}, N(v_{3}))\le (n-d(v_{3}))(n-1)+(n-1),$$
$$e(V_{1}, V_{3})= e(V_{3}\setminus\{v_{3}\}, V_{1}\setminus N(v_{2}))+e(V_{3}\setminus\{v_{3}\}, N(v_{2}))\le (n-d(v_{2}))(n-1)+(n-1),$$
$$e(V_{2}, V_{3})= e(V_{2}\setminus\{v_{2}\}, V_{3}\setminus N(v_{1}))+e(V_{2}\setminus\{v_{2}\}, N(v_{1}))\le (n-d(v_{1}))(n-1)+(n-1).$$

Thus,
\[\begin{split}
e(G)&=e(V_{1}, V_{2})+e(V_{1}, V_{3})+e(V_{2}, V_{3})\\
&\le \bigg(3n-(d(v_{1})+d(v_{2})+d(v_{3}))\bigg)(n-1)+3(n-1)\\
&\le\bigg(3n-(2n+2)\bigg)(n-1)+3(n-1)\\
&\le n^{2}-1.
\end{split}\]

{\bf Case 2.} $n_{1}>n_{2}=n_{3}=n\ge 2.$

If there exists one vertex $v_{0}\in V_{1}$ such that $d(v_{0})\le n$, by the induction hypothesis, we have $e(G)= e(G-v_{0})+d(v_{0})\le (n_{1}-1)n+2n+n\le n_{1}n+2n.$ Otherwise, we have $d(v)\ge n+1$ for all vertices $v\in V_{1}$. Hence,  $N(v)\cap V_{2}\ne \0$ and $N(v)\cap V_{3}\ne \0$ hold for all $v\in V_{1}$. By Lemma \ref{lemma1}, we get $e(G)\le n^{2}+2n_{1}\le n_{1}n+2n.$

{\bf Case 3.} $n_{1}\ge n_{2}>n_{3}\ge 2.$

If there exists one vertex $v_{0}\in V_{2}$ such that $d(v_{0})\le n_{1}$, by the induction hypothesis, we have $e(G)= e(G-v_{0})+d(v_{0})\le n_{1}(n_{2}-1)+2n_{3}+n_{1}\le n_{1}n_{2}+2n_{3}.$ Otherwise, we have $d(v)\ge n_{1}+1$ for all vertices $v\in V_{2}$. Hence, $N(v)\cap V_{1}\ne \0$ and $N(v)\cap V_{3}\ne \0$ for all $v\in V_{2}$. By Lemma \ref{lemma1}, we get $e(G)\le n_{1}n_{3}+2n_{2}\le n_{1}n_{2}+2n_{3}.$
\end{proof}

\begin{proof} [Proof of Theorem \ref{exC3C4}]
Let $G\subseteq K_{n_1,n_2,n_3}$ be a graph, such that $V_1$ and $V_2$ are completely joined, $V_1$  and $V_3$ are joined by an $n_3$-matching and there is no edge between $V_2$ and $V_3$, see Figure \ref{l2}. Clearly, $G$ is $\{C_3,C^{multi}_4\}$-free and $e(G)=n_1n_2+n_3$. Therefore, $\ex(K_{n_1,n_2,n_3}, \{C_3,C^{multi}_4\})\geq n_1n_2+n_3$.

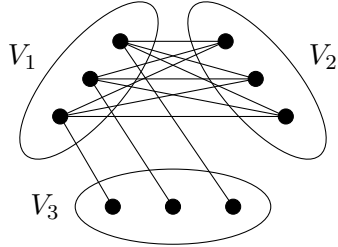
\begin{figure}[h]
\centering
\begin{tikzpicture}
\draw[rotate around={40:(-12,-5)}] (-12,-5) arc(0:360: .5cm and 1.3cm);
\draw[rotate around={140:(-15,-5)}] (-15,-5) arc(0:360: .5cm and 1.3cm);
\draw (-12.2,-7) arc(0:360: 1.3cm and 0.5cm);

\node at(-15.5, -5){$V_1$};
\node at(-11.5, -5){$V_2$};
\node at(-15.2, -7){$V_3$};

\draw [fill](-12,-5.8)circle [radius = 0.1];
\draw [fill](-12.4,-5.3)circle [radius = 0.1];
\draw [fill](-12.8,-4.8)circle [radius = 0.1];
\draw [fill](-15,-5.8)circle [radius = 0.1];
\draw [fill](-14.6,-5.3)circle [radius = 0.1];
\draw [fill](-14.2,-4.8)circle [radius = 0.1];
\draw [fill](-12.7,-7)circle [radius = 0.1];
\draw [fill](-13.5,-7)circle [radius = 0.1];
\draw [fill](-14.3,-7)circle [radius = 0.1];

\draw  (-12.8,-4.8) -- (-14.2,-4.8);
\draw  (-12.8,-4.8) -- (-15,-5.8);
\draw  (-12.8,-4.8) -- (-14.6,-5.3);

\draw  (-12,-5.8) -- (-15,-5.8);
\draw  (-12,-5.8) -- (-14.2,-4.8);
\draw  (-12,-5.8) -- (-14.6,-5.3);

\draw  (-12.4,-5.3) -- (-14.2,-4.8);
\draw  (-12.4,-5.3) -- (-15,-5.8);
\draw  (-12.4,-5.3) -- (-14.6,-5.3);

\draw  (-14.3,-7) -- (-15,-5.8);
\draw  (-13.5,-7) -- (-14.6,-5.3);
\draw  (-12.7,-7) -- (-14.2,-4.8);

\end{tikzpicture}
\caption{An example of $\{C_3, C^{multi}_4\}$-free graph with $n_1n_2+n_3$ edges.}
\label{l2}
\end{figure}

Let $G\subseteq K_{n_{1},n_{2},n_{3}}$ such that $G$ is $\{C_3,C^{\text{multi}}_{4}\}$-free, now we are going to prove $e(G)\le n_{1}n_{2}+n_{3}$ by induction on $n_{1}+n_{2}+n_{3}$. For the base case $n_{3}=1$, let $V_{3}=\{v\}$. We consider the following four subcases;

$(i)$ $N(v)\cap V_{1}\ne \0$ and $N(v)\cap V_{2}\ne \0$, then by Lemma \ref{lemma1}, $e(G)\le n_{1}n_{2}+1$.

$(ii)$  $N(v)\cap V_{1}\ne \0$ and $N(v)\cap V_{2}= \0$,  then
\[\begin{split} e(G)&=e(V_{3}, N(v))+e(V_{2}, N(v))+e(V_{2},V_{1}\setminus N(v) )\\
&\le d(v)+n_{2}+(n_{1}-d(v))n_{2}\\
&\le n_{1}n_{2}+1.
\end{split}\]

$(iii)$ $N(v)\cap V_{1}= \0$ and $N(v)\cap V_{2}\ne \0$,  then
\[\begin{split}e(G)&=e(V_{3}, N(v))+e(V_{1}, N(v))+e(V_{1},V_{2}\setminus N(v))\\
&\le d(v)+n_{1}+(n_{2}-d(v))n_{1}\\
&\le n_{1}n_{2}+1.
\end{split}\]

$(iv)$ $N(v)\cap V_{1}= \0$ and $N(v)\cap V_{2}= \0$, then $e(G)= e(V_{1}, V_{2})\le n_{1}n_{2}.$

Now let $n_{3}\ge 2$, and assume that the conclusion is correct for order less than $n_{1}+n_{2}+n_{3}.$ We consider the following three cases.

{\bf Case 1.} $n_{1}=n_{2}=n_{3}=n\ge 2.$

If there exists one part, say $V_{1}$, such that $N(v)\cap V_{2}\ne\0$ and $N(v)\cap V_{3}\ne\0$, for all $v\in V_{1}$, then by Lemma \ref{lemma1}, we have $e(G)\le |V_{2}||V_{3}|+|V_{1}|=n^{2}+n.$

Thus, we may assume that for all $i\in [3]$, there exists a vertex $v\in V_{i}$ and $j\in [3]\setminus\{i\}$ such that $N(v)\cap V_{j}=\0.$ We separate the proof into two subcases.

{\bf Case 1.1.} There exist two parts, say $V_{1}$ and $V_{2}$, such that $N(v_{1})\cap V_{2}=\0$ and $N(v_{2})\cap V_{1}=\0$ for some vertices $v_{1}\in V_{1}$ and $v_{2}\in V_{2}.$

Since $G$ is $\{C_{3}, C_{4}^{\text{multi}}\}$-free, $d(v_{1})+d(v_{2})\le |V_{3}|+1=n+1.$ Without loss of generality, let $v_{3}\in V_{3}$ be the vertex such that $N(v_{3})\cap V_{1}=\0$. Then the number of edges incident to $\{v_{1}, v_{2}, v_{3}\}$ in $G$ is at most $d(v_{1})+d(v_{2})+n-1\le 2n.$ By the induction hypothesis, $e(G-\{v_{1}, v_{2}, v_{3}\})\le (n-1)^{2}+(n-1)$. Thus, $e(G)\le (n-1)^{2}+(n-1)+2n\le n^{2}+n.$

{\bf Case 1.2.} There exist vertices $v_{1}\in V_{1}, v_{2}\in V_{2}$ and $v_{3}\in V_{3}$ such that either $N(v_{1})\cap V_{2}=\0, N(v_{2})\cap V_{3}=\0, N(v_{3})\cap V_{1}=\0$ or $N(v_{1})\cap V_{3}=\0, N(v_{3})\cap V_{2}=\0, N(v_{2})\cap V_{1}=\0$ holds.

Without loss of generality, we assume that $N(v_{1})\cap V_{2}=\0, N(v_{2})\cap V_{3}=\0, N(v_{3})\cap V_{1}=\0$. If $d(v_{1})+d(v_{2})+d(v_{3})\le 2n$, by the induction hypothesis, we have
\[\begin{split}
e(G)&\le e(G-\{v_{1}, v_{2}, v_{3}\})+d(v_{1})+d(v_{2})+d(v_{3})\\
&\le (n-1)^{2}+(n-1)+2n\\
&\le n^{2}+n.
\end{split}\]
Otherwise,  $d(v_{1})+d(v_{2})+d(v_{3})\ge 2n+1$, hence, $d(v_{1})\ge 1, d(v_{2})\ge 1$, and $d(v_{3})\ge 1$. Since $G$ is $\{C_{3}, C_{4}^{\text{multi}}\}$-free,
$$e(V_{1}, V_{2})= e(V_{1}\setminus\{v_{1}\}, V_{2}\setminus N(v_{3}))+e(V_{1}\setminus\{v_{1}\}, N(v_{3}))\le (n-d(v_{3}))(n-1)+(n-1),$$
$$e(V_{1}, V_{3})= e(V_{3}\setminus\{v_{3}\}, V_{1}\setminus N(v_{2}))+e(V_{3}\setminus\{v_{3}\}, N(v_{2}))\le (n-d(v_{2}))(n-1)+(n-1),$$
$$e(V_{2}, V_{3})= e(V_{2}\setminus\{v_{2}\}, V_{3}\setminus N(v_{1}))+e(V_{2}\setminus\{v_{2}\}, N(v_{1}))\le (n-d(v_{1}))(n-1)+(n-1).$$

Thus,
\[\begin{split}
e(G)&=e(V_{1}, V_{2})+e(V_{1}, V_{3})+e(V_{2}, V_{3})\\
&\le \bigg(3n-(d(v_{1})+d(v_{2})+d(v_{3}))\bigg)(n-1)+3(n-1)\\
&\le(3n-(2n+2))(n-1)+3(n-1)\\
&\le n^{2}-1.
\end{split}\]

{\bf Case 2.} $n_{1}>n_{2}=n_{3}=n\ge 2.$

If there exists one vertex $v_{0}\in V_{1}$ such that $d(v_{0})\le n$, by the induction hypothesis, we have $e(G)= e(G-v_{0})+d(v_{0})\le (n_{1}-1)n+n+n\le n_{1}n+n.$ Otherwise, $d(v)\ge n+1$ for all vertex $v\in V_{1}$. Thus $N(v)\cap V_{2}\ne \0$ and $N(v)\cap V_{3}\ne \0.$ By Lemma \ref{lemma1}, we have $e(G)\le n^{2}+n_{1}\le n_{1}n+n.$

{\bf Case 3.} $n_{1}\ge n_{2}>n_{3}\ge 2.$

If there exists one vertex $v_{0}\in V_{2}$ such that $d(v_{0})\le n_{1}$,  by the induction hypothesis, we have $e(G)= e(G-v_{0})+d(v_{0})\le n_{1}(n_{2}-1)+n_{3}+n_{1}\le n_{1}n_{2}+n_{3}.$ Otherwise, $d(v)\ge n_{1}+1$ for all vertex $v\in V_{2}$. Thus $N(v)\cap V_{1}\ne \0$ and $N(v)\cap V_{3}\ne \0.$ By Lemma \ref{lemma1}, we have $e(G)\le n_{1}n_{3}+n_{2}\le n_{1}n_{2}+n_{3}.$
\end{proof}

\section{The anti-Ramsey number of $C_{4}^{\text{multi}}$}

In this section, we study the anti-Ramsey number of $C_{4}^{\text{multi}}$ in the complete $3$-partite graphs. Given an edge-coloring $c$ of $G$, we denote the color of an edge $e$ by $c(e)$. For a subgraph $H$ of $G$, we denote $C(H) = \{c(e)|e \in E(H)\}$. We call a spanning subgraph of an edge-colored graph representing subgraph, if it contains exactly one edge of each color.

Given graphs $G_{1}$ and $G_{2}$, we use $G_{1}\wedge G_{2}$ to denote the graph consisting of $G_{1}$ and $G_{2}$ which intersect in exactly one common vertex. We call a multipartite $C_{6}$ in a $3$-partite graph non-cyclic if there exists a vertex $v$ in $C_6$ such that the two neigborhoods in $C_6$ of $v$  belong to the same part. Let $\mathcal{F}$ be a graph family which consists of $C_{4}^{\text{multi}}$ (see graph $G_1$ in Figure \ref{F}), $C_{3}\wedge C_{3}$ (see graph $G_2$ in Figure \ref{F}), the non-cyclic $C_{6}^{multi}$ (see graphs $G_3, G_4$ in Figure \ref{F}) and $C_{3}\wedge C_{5}$ (see graphs $G_5, G_6, G_7$ in Figure \ref{F}) 
and the $C_{8}^{\text{multi}}$ which contains at least two vertex-disjoint non-multipartite $P_{3}$ (see graph $G_8$ in Figure \ref{F}).

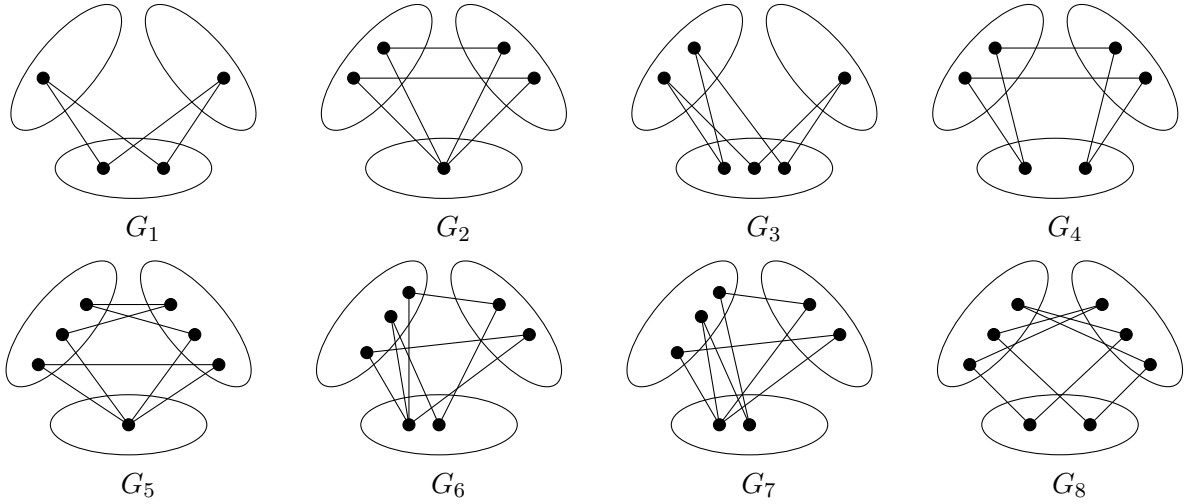
\begin{figure}[h]
\begin{center}
\begin{minipage}{4cm}
\begin{tikzpicture}[scale=0.8]

\draw[rotate around={40:(-17,5)}] (-17,5) arc(0:360: .5cm and 1.3cm);
\draw[rotate around={140:(-20,5)}] (-20,5) arc(0:360: .5cm and 1.3cm);
\draw (-17.2,3) arc(0:360: 1.3cm and 0.5cm);

\draw [fill](-17,4.5)circle [radius = 0.1];
\draw [fill](-20,4.5)circle [radius = 0.1];
\draw [fill](-18,3)circle [radius = 0.1];
\draw [fill](-19,3)circle [radius = 0.1];

\draw  (-17,4.5) -- (-18,3);
\draw  (-17,4.5) -- (-19,3);
\draw  (-20,4.5) -- (-18,3);
\draw  (-20,4.5) -- (-19,3);
\node[right] at(-18.8,2){$G_1$};
\end{tikzpicture}
\end{minipage}
\begin{minipage}{4cm}
\begin{tikzpicture}[scale=0.8]
\draw[rotate around={40:(-12,5)}] (-12,5) arc(0:360: .5cm and 1.3cm);
\draw[rotate around={140:(-15,5)}] (-15,5) arc(0:360: .5cm and 1.3cm);
\draw (-12.2,3) arc(0:360: 1.3cm and 0.5cm);

\draw [fill](-12,4.5)circle [radius = 0.1];
\draw [fill](-12.5,5)circle [radius = 0.1];
\draw [fill](-15,4.5)circle [radius = 0.1];
\draw [fill](-14.5,5)circle [radius = 0.1];
\draw [fill](-13.5,3)circle [radius = 0.1];

\draw  (-12,4.5) -- (-13.5,3);
\draw  (-12.5,5) -- (-13.5,3);
\draw  (-15,4.5) -- (-13.5,3);
\draw  (-14.5,5) -- (-13.5,3);
\draw  (-12,4.5) -- (-15,4.5);
\draw  (-12.5,5) -- (-14.5,5);
\node[right] at(-13.8,2){$G_2$};
\end{tikzpicture}
\end{minipage}
\begin{minipage}{8cm}
\begin{tikzpicture}[scale=0.8]
\draw[rotate around={40:(-17,0)}] (-17,0) arc(0:360: .5cm and 1.3cm);
\draw[rotate around={140:(-20,0)}] (-20,0) arc(0:360: .5cm and 1.3cm);
\draw (-17.2,-2) arc(0:360: 1.3cm and 0.5cm);

\draw [fill](-17,-0.5)circle [radius = 0.1];
\draw [fill](-20,-0.5)circle [radius = 0.1];
\draw [fill](-19.5,0)circle [radius = 0.1];
\draw [fill](-18,-2)circle [radius = 0.1];
\draw [fill](-18.5,-2)circle [radius = 0.1];
\draw [fill](-19,-2)circle [radius = 0.1];

\draw  (-20,-0.5) -- (-19,-2);
\draw  (-19.5,0) -- (-19,-2);
\draw  (-20,-0.5) -- (-18.5,-2);
\draw  (-18,-2) -- (-17,-0.5);
\draw  (-18.5,-2) -- (-17,-0.5);
\draw  (-18,-2) -- (-19.5,0);

\node[right] at(-18.8,-3){$G_3$};
\draw[rotate around={40:(-12,0)}] (-12,0) arc(0:360: .5cm and 1.3cm);
\draw[rotate around={140:(-15,0)}] (-15,0) arc(0:360: .5cm and 1.3cm);
\draw (-12.2,-2) arc(0:360: 1.3cm and 0.5cm);

\draw [fill](-12,-0.5)circle [radius = 0.1];
\draw [fill](-12.5,0)circle [radius = 0.1];
\draw [fill](-15,-0.5)circle [radius = 0.1];
\draw [fill](-14.5,0)circle [radius = 0.1];
\draw [fill](-13,-2)circle [radius = 0.1];
\draw [fill](-14,-2)circle [radius = 0.1];

\draw  (-12,-0.5) -- (-15,-0.5);
\draw  (-12.5,0) -- (-14.5,0);
\draw  (-12,-0.5) -- (-13,-2);
\draw  (-12.5,0) -- (-13,-2);
\draw  (-14.5,0) -- (-14,-2);
\draw  (-15,-0.5) -- (-14,-2);

\node[right] at(-13.8,-3){$G_4$};
\end{tikzpicture}
\end{minipage}\\ 
\begin{minipage}{4cm}
\begin{tikzpicture}[scale=0.8]
\draw[rotate around={40:(-7,5)}] (-7,5) arc(0:360: .5cm and 1.3cm);
\draw[rotate around={140:(-10,5)}] (-10,5) arc(0:360: .5cm and 1.3cm);
\draw (-7.2,3) arc(0:360: 1.3cm and 0.5cm);

\draw [fill](-7,4)circle [radius = 0.1];
\draw [fill](-7.8,5)circle [radius = 0.1];
\draw [fill](-7.4,4.5)circle [radius = 0.1];
\draw [fill](-10,4)circle [radius = 0.1];
\draw [fill](-9.2,5)circle [radius = 0.1];
\draw [fill](-9.6,4.5)circle [radius = 0.1];
\draw [fill](-8.5,3)circle [radius = 0.1];


\draw  (-7.4,4.5) -- (-8.5,3);
\draw  (-9.6,4.5) -- (-8.5,3);
\draw  (-7,4) -- (-8.5,3);
\draw  (-10,4) -- (-8.5,3);
\draw  (-7.4,4.5) -- (-9.2,5);
\draw  (-9.6,4.5) -- (-7.8,5);
\draw  (-7,4) -- (-10,4);
\draw  (-7.8,5) -- (-9.2,5);

\node[right] at(-8.8,2){$G_5$};
\end{tikzpicture}
\end{minipage}
\begin{minipage}{4cm}
\begin{tikzpicture}[scale=0.8]

\draw[rotate around={40:(-2,5)}] (-2,5) arc(0:360: .5cm and 1.3cm);
\draw[rotate around={140:(-5,5)}] (-5,5) arc(0:360: .5cm and 1.3cm);
\draw (-2.2,3) arc(0:360: 1.3cm and 0.5cm);

\draw [fill](-2,4.5)circle [radius = 0.1];
\draw [fill](-2.5,5)circle [radius = 0.1];
\draw [fill](-4,5.2)circle [radius = 0.1];
\draw [fill](-4.3,4.8)circle [radius = 0.1];
\draw [fill](-4.7,4.2)circle [radius = 0.1];
\draw [fill](-3.5,3)circle [radius = 0.1];
\draw [fill](-4,3)circle [radius = 0.1];


\draw  (-4,3) -- (-4,5.2);
\draw  (-2.5,5) -- (-3.5,3);
\draw (-2.5,5) -- (-4,5.2);
\draw  (-4,3) -- (-4.3,4.8);
\draw  (-4.7,4.2) -- (-2,4.5);
\draw  (-4,3) -- (-2,4.5);
\draw  (-4.7,4.2) -- (-4,3);
\draw  (-4.3,4.8) -- (-3.5,3);
\node[right] at(-3.8,2){$G_6$};
\end{tikzpicture}
\end{minipage}
\begin{minipage}{4cm}
\begin{tikzpicture}[scale=0.8]

\draw[rotate around={40:(-2,5)}] (-2,5) arc(0:360: .5cm and 1.3cm);
\draw[rotate around={140:(-5,5)}] (-5,5) arc(0:360: .5cm and 1.3cm);
\draw (-2.2,3) arc(0:360: 1.3cm and 0.5cm);

\draw [fill](-2,4.5)circle [radius = 0.1];
\draw [fill](-2.5,5)circle [radius = 0.1];
\draw [fill](-4,5.2)circle [radius = 0.1];
\draw [fill](-4.3,4.8)circle [radius = 0.1];
\draw [fill](-4.7,4.2)circle [radius = 0.1];
\draw [fill](-3.5,3)circle [radius = 0.1];
\draw [fill](-4,3)circle [radius = 0.1];

\draw  (-3.5,3) -- (-4,5.2);
\draw  (-2.5,5) -- (-4,3);
\draw (-2.5,5) -- (-4,5.2);

\draw  (-4,3) -- (-4.3,4.8);
\draw  (-4.7,4.2) -- (-2,4.5);
\draw  (-4,3) -- (-2,4.5);
\draw  (-4.7,4.2) -- (-4,3);
\draw  (-4.3,4.8) -- (-3.5,3);
\node[right] at(-3.8,2){$G_7$};
\end{tikzpicture}
\end{minipage}
\begin{minipage}{4cm}
\begin{tikzpicture}[scale=0.8]
\draw[rotate around={40:(-7,-5)}] (-7,-5) arc(0:360: .5cm and 1.3cm);
\draw[rotate around={140:(-10,-5)}] (-10,-5) arc(0:360: .5cm and 1.3cm);
\draw (-7.2,-7) arc(0:360: 1.3cm and 0.5cm);

\draw [fill](-7,-6)circle [radius = 0.1];
\draw [fill](-7.8,-5)circle [radius = 0.1];
\draw [fill](-7.4,-5.5)circle [radius = 0.1];
\draw [fill](-10,-6)circle [radius = 0.1];
\draw [fill](-9.2,-5)circle [radius = 0.1];
\draw [fill](-9.6,-5.5)circle [radius = 0.1];
\draw [fill](-8,-7)circle [radius = 0.1];
\draw [fill](-9,-7)circle [radius = 0.1];

\draw  (-7.8,-5) -- (-9.6,-5.5);
\draw  (-7.4,-5.5) -- (-9.2,-5);
\draw  (-7,-6) -- (-9.2,-5);
\draw  (-7.8,-5) -- (-10,-6);
\draw  (-8,-7) -- (-9.6,-5.5);
\draw  (-7.4,-5.5) -- (-9,-7);
\draw  (-7,-6) -- (-8,-7);
\draw  (-9,-7) -- (-10,-6);
\node[right] at(-8.8,-8){$G_8$};
\end{tikzpicture}
\end{minipage}
\caption{$\mathcal{F} = \{G_1\} \cup \{G_2\} \cup\{G_3,G_4\} \cup\{G_5,G_6,G_7\} \cup\{G_8\}$.} \label{F}
\end{center}
\end{figure}

The following lemma will help us to find a rainbow $C_{4}^{\text{multi}}$ in the edge-colored complete $3$-partite graphs and the idea comes from \cite{Alon}.

\begin{lemma}\label{lemma2}
Let $n_{1}\ge n_{2}\ge n_{3}\ge 1$. For an edge-colored $K_{n_{1},n_{2},n_{3}}$, if there is a rainbow copy of some graph in $\mathcal{F}$, then there is a rainbow copy of $C_{4}^{\text{multi}}$.
\end{lemma}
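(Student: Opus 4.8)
The plan is to settle the cases in the order $G_1;\ G_2;\ \{G_3,G_4\};\ \{G_5,G_6,G_7\};\ G_8$, so that each configuration may invoke the ones already handled. Since $G_1$ is itself a $C_4^{\text{multi}}$, there is nothing to prove there. For every other $G_i$ the engine is one uniform observation, which I will call the \emph{chord-splitting argument}: given a rainbow copy of $G_i$ (so its edges carry pairwise distinct colors), suppose I can pick an edge $e\in E(K_{n_1,n_2,n_3})\setminus E(G_i)$ — a \emph{chord} — that cuts the cycle(s) of $G_i$ into two subconfigurations $H_1,H_2$ whose only common edge is $e$ and whose remaining edges partition $E(G_i)$. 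The single color $c(e)$ can agree with at most one edge-color of $G_i$, hence lies in the color class of at most one of $H_1,H_2$, so at least one of them is rainbow. I will always arrange each $H_j$ to be a $C_4^{\text{multi}}$, a $C_3\wedge C_3$, or a non-cyclic $C_6^{\text{multi}}$ — a configuration that an earlier case has already shown to force a rainbow $C_4^{\text{multi}}$.

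Concretely, for $G_2=C_3\wedge C_3$ with common vertex $c$ and triangles $cx_1y_1,\,cx_2y_2$ (so $x_1,x_2$ and $y_1,y_2$ lie in two of the parts and $c$ in the third), I would take the chord $e=x_2y_1$ and set $H_1=x_1\,c\,x_2\,y_1$ and $H_2=y_1\,c\,y_2\,x_2$. Both are copies of $C_4^{\text{multi}}$, they meet only in $e$, and their triangle-edges partition the six edges of $G_2$; hence one of the two is a rainbow $C_4^{\text{multi}}$. For a non-cyclic $C_6^{\text{multi}}$ ($G_3,G_4$) I would use an antipodal chord: being non-cyclic forces some pair of opposite vertices to lie in distinct parts (otherwise every vertex would have its two neighbours in different parts, i.e.\ the hexagon would be cyclic), and a suitable such chord cuts the hexagon into two quadrilaterals, each a $C_4^{\text{multi}}$. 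The two drawings $G_3,G_4$ record the two ways the parts can be distributed and the correspondingly admissible choice of antipodal chord.

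For $C_3\wedge C_5$ ($G_5,G_6,G_7$) I would keep the triangle fixed and split the pentagon. A chord from the common vertex $c$ to a pentagon vertex at cyclic distance two cuts the $C_5$ into a triangle through $c$ and a quadrilateral; the quadrilateral is a $C_4^{\text{multi}}$, while the new triangle together with the original triangle is a $C_3\wedge C_3$. By the chord-splitting argument one of these two is rainbow, and by the already-settled case $G_2$ a rainbow $C_3\wedge C_3$ again yields a rainbow $C_4^{\text{multi}}$. The three figures correspond to the possible $3$-colorings of the pentagon relative to $c$ and to the chord that is admissible in each.

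The heart of the matter, and the step I expect to be most delicate, is $G_8$: a $C_8^{\text{multi}}$ carrying two vertex-disjoint non-multipartite $P_3$'s (``cherries''), each being three consecutive vertices whose middle vertex has both cycle-neighbours in one part. Since a single chord can never split a $C_8$ into two quadrilaterals, I would instead take a cherry $x\,y\,z$ (with $x,z$ in a common part) and the chord $e=xz'$, where $z'$ is the cycle-neighbour of $z$ other than $y$; as $z'$ lies outside the part of $x,z$, this chord exists, and it cuts $C_8$ into the quadrilateral $x\,y\,z\,z'$ and a hexagon on the other six vertices, whose edges partition $E(G_8)$. The goal is to guarantee \emph{simultaneously} that the quadrilateral is a genuine $C_4^{\text{multi}}$ and that the hexagon is a non-cyclic $C_6^{\text{multi}}$; then the chord-splitting argument together with the already-settled hexagon case finishes. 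The obstacle is that whether both pieces have the required type depends on the parts of $y$ and $z'$, on the parts of the two leftover vertices, and on whether the two cherry-centres sit at cyclic distance three or four. The hard part will therefore be to enumerate these configurations — this is exactly why one representative drawing must stand for a whole family — and to check in each that one of the two cherries, with a suitable choice of $z'$ or its mirror, produces a multipartite quadrilateral and a non-cyclic multipartite hexagon; proving that such a chord always exists, not merely for the depicted instance, is where the care is needed.
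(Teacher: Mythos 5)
Your chord-splitting engine is exactly the mechanism the paper uses, and your handling of $G_2$, of $G_3,G_4$, and of $G_5$ agrees with the paper's proof; but your construction for the $C_3\wedge C_5$ family genuinely fails at $G_7$. In $G_7$ the triangle is $u_1v_1w_1$ and the pentagon, read from the common vertex $u_1$, is $u_1w_2v_3u_2v_2$ with $u_1,u_2\in U$, $w_2\in W$, $v_3,v_2\in V$, i.e.\ the pattern $A,B,C,A,C$. The two pentagon vertices at cyclic distance two from $u_1$ are $u_2$ and $v_3$: the chord $u_1u_2$ does not exist in $K_{n_1,n_2,n_3}$ (same part), and the chord $u_1v_3$ cuts off the quadrilateral $u_1v_3u_2v_2$ with parts $U,V,U,V$, which is not multipartite. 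So for $G_7$ there is no admissible chord from the apex at all, and your claim that each of the three pentagon figures admits one is false. The paper escapes by using a chord of a different kind for $G_6$ and $G_7$, joining a pentagon vertex to a triangle vertex (the red edge $v_2w_1$ in $G_7$): either the multipartite $C_4$ given by $v_2w_1v_1u_1v_2$ is rainbow, or the chord's color already appears on that $C_4$, in which case the hexagon $v_2w_1u_1w_2v_3u_2v_2$ is rainbow, and it is a non-cyclic $C_6^{\text{multi}}$ (both hexagon-neighbors of $u_1$ lie in $W$); the reduction is thus to the already-settled hexagon case, not to $C_3\wedge C_3$. (One could also repair your scheme inside the pentagon by taking the chord $w_2v_2$ between the two pentagon-neighbors of $u_1$, which yields a multipartite $C_4$ plus a $C_3\wedge C_3$; but that is not the construction you describe.)

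Two smaller gaps. For $G_3,G_4$, your parenthetical only proves that some antipodal pair lies in distinct parts, which is weaker than what you need: in a hexagon with parts $A,B,A,B,A,C$ the antipodal pairs at positions $(1,4)$ and $(2,5)$ lie in distinct parts, yet each cuts off a quadrilateral alternating between two parts; only the pair $(3,6)$ gives two multipartite quadrilaterals. The conclusion does hold for both non-cyclic hexagon types, but only after a finite check you did not make. For $G_8$, your chord $xz'$ is precisely the paper's red edge, and since the second cherry is vertex-disjoint from the first it survives intact inside the resulting hexagon and forces it to be non-cyclic, so your plan can be completed; but you concede that the enumeration guaranteeing both a multipartite quadrilateral and a multipartite hexagon is missing, so that case remains a sketch --- whereas $G_7$, as written, is wrong.
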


\begin{proof}

We separate the  proof into three cases.

{\bf Case 1.} An edge-colored $K_{n_1,n_2,n_3}$ contains a rainbow copy of $G_2$, $G_3$ or $G_4$.

\begin{figure}[h]
\centering
\begin{minipage}{4cm}
\begin{tikzpicture}[scale=0.8]
\draw[rotate around={40:(-12,5)}] (-12,5) arc(0:360: .5cm and 1.3cm);
\draw[rotate around={140:(-15,5)}] (-15,5) arc(0:360: .5cm and 1.3cm);
\draw (-12.2,3) arc(0:360: 1.3cm and 0.5cm);

\draw [fill](-12,4.5)circle [radius = 0.1];
\draw [fill](-12.5,5)circle [radius = 0.1];
\draw [fill](-15,4.5)circle [radius = 0.1];
\draw [fill](-14.5,5)circle [radius = 0.1];
\draw [fill](-13.5,3)circle [radius = 0.1];
\node[below] at(-11.8,4.5){$w_1$};
\node[below] at(-12.3,5){$w_2$};
\node[below] at(-15.1,4.5){$v_1$};
\node[below] at(-14.7,5){$v_2$};
\node[below] at(-13.5,3){$u$};

\draw  (-12,4.5) -- (-13.5,3);
\draw  (-12.5,5) -- (-13.5,3);
\draw  (-15,4.5) -- (-13.5,3);
\draw  (-14.5,5) -- (-13.5,3);
\draw  (-12,4.5) -- (-15,4.5);
\draw  (-12.5,5) -- (-14.5,5);
\draw[red] (-15,4.5) -- (-12.5,5); 
\node[right] at(-13.8,2){$G_2$};
\end{tikzpicture}
\end{minipage}
\begin{minipage}{8cm}
\begin{tikzpicture}[scale=0.8]
\draw[rotate around={40:(-17,0)}] (-17,0) arc(0:360: .5cm and 1.3cm);
\draw[rotate around={140:(-20,0)}] (-20,0) arc(0:360: .5cm and 1.3cm);
\draw (-17.2,-2) arc(0:360: 1.3cm and 0.5cm);

\draw [fill](-17,-0.5)circle [radius = 0.1];
\draw [fill](-20,-0.5)circle [radius = 0.1];
\draw [fill](-19.5,0)circle [radius = 0.1];
\draw [fill](-18,-2)circle [radius = 0.1];
\draw [fill](-18.5,-2)circle [radius = 0.1];
\draw [fill](-19,-2)circle [radius = 0.1];

\draw  (-20,-0.5) -- (-19,-2);
\draw  (-19.5,0) -- (-19,-2);
\draw  (-20,-0.5) -- (-18.5,-2);
\draw  (-18,-2) -- (-17,-0.5);
\draw  (-18.5,-2) -- (-17,-0.5);
\draw  (-18,-2) -- (-19.5,0);

\draw[red] (-19,-2) -- (-17,-0.5); 
\node[right] at(-18.8,-3){$G_3$};
\draw[rotate around={40:(-12,0)}] (-12,0) arc(0:360: .5cm and 1.3cm);
\draw[rotate around={140:(-15,0)}] (-15,0) arc(0:360: .5cm and 1.3cm);
\draw (-12.2,-2) arc(0:360: 1.3cm and 0.5cm);

\draw [fill](-12,-0.5)circle [radius = 0.1];
\draw [fill](-12.5,0)circle [radius = 0.1];
\draw [fill](-15,-0.5)circle [radius = 0.1];
\draw [fill](-14.5,0)circle [radius = 0.1];
\draw [fill](-13,-2)circle [radius = 0.1];
\draw [fill](-14,-2)circle [radius = 0.1];

\draw  (-12,-0.5) -- (-15,-0.5);
\draw  (-12.5,0) -- (-14.5,0);
\draw  (-12,-0.5) -- (-13,-2);
\draw  (-12.5,0) -- (-13,-2);
\draw  (-14.5,0) -- (-14,-2);
\draw  (-15,-0.5) -- (-14,-2);

\draw[red] (-15,-0.5) -- (-12.5,0); 
\node[right] at(-13.8,-3){$G_4$};
\end{tikzpicture}
\end{minipage}
\caption{}
\label{l5}
\end{figure}
Suppose there is a rainbow copy of $G_2$ in  $K_{n_1,n_2,n_3}$, See Figure \ref{l5}, then whatever the color of $v_{1}w_{2}$ is, at least one of  $v_1uv_2w_2v_1$ and $v_1w_2uw_1v_1$ is a rainbow  $C_{4}^{\text{multi}}$. Similarly, with the help of the red edge that showed in $G_3$ and $G_4$, see Figure \ref{l5}, one can easily find a rainbow copy of $C_{4}^{\text{multi}}$ if there is a rainbow copy of $G_3$ or $G_{4}$.


{\bf Case 2.} An edge-colored $K_{n_1,n_2,n_3}$ contains a rainbow copy of $G_5$.


Suppose there is a rainbow copy of $G_5$ in 
$K_{n_1,n_2,n_3}$, see Figure \ref{l6}. If $v_3w_3uw_2v_3$ is not rainbow, then $uw_3$ shares the same color with one of $v_3w_3$, $v_3w_2$ and $uw_2$. Hence, $uv_2w_3u\cup uv_1w_2u$ is a rainbow copy of $G_2$, by Case $1$, we can find a rainbow copy of $C_{4}^{\text{multi}}$.


\begin{figure}[h]
    \centering
\begin{tikzpicture}[scale=0.8]
\draw[rotate around={40:(-7,5)}] (-7,5) arc(0:360: .5cm and 1.3cm);
\draw[rotate around={140:(-10,5)}] (-10,5) arc(0:360: .5cm and 1.3cm);
\draw (-7.2,3) arc(0:360: 1.3cm and 0.5cm);

\draw [fill](-7,4)circle [radius = 0.1];
\draw [fill](-7.8,5)circle [radius = 0.1];
\draw [fill](-7.4,4.5)circle [radius = 0.1];
\draw [fill](-10,4)circle [radius = 0.1];
\draw [fill](-9.2,5)circle [radius = 0.1];
\draw [fill](-9.6,4.5)circle [radius = 0.1];
\draw [fill](-8.5,3)circle [radius = 0.1];

\node at(-10.3,4){$v_1$};
\node at(-10,  4.5){$v_2$};
\node at(-9.6, 5){$v_3$};
\node at(-7.3, 5){$w_3$};
\node at(-7, 4.5){$w_2$};
\node at(-6.6, 4){$w_1$};
\node at(-8.5, 2.7){$u$};

\draw  (-7.4,4.5) -- (-8.5,3);
\draw  (-9.6,4.5) -- (-8.5,3);
\draw  (-7,4) -- (-8.5,3);
\draw  (-10,4) -- (-8.5,3);
\draw  (-7.4,4.5) -- (-9.2,5);
\draw  (-9.6,4.5) -- (-7.8,5);
\draw  (-7,4) -- (-10,4);
\draw  (-7.8,5) -- (-9.2,5);

\draw[red]  (-8.5,3) -- (-7.8,5);
\node[right] at(-8.8,2){$G_5$};
\end{tikzpicture}
\caption{}
\label{l6}
\end{figure}

{\bf Case 3.} An edge-colored $K_{n_1,n_2,n_3}$ contains a rainbow copy of $G_6$, $G_7$ or $G_8$.

\begin{figure}[h]
    \centering
    \begin{minipage}{4cm}
\begin{tikzpicture}[scale=0.8]

\draw[rotate around={40:(-2,5)}] (-2,5) arc(0:360: .5cm and 1.3cm);
\draw[rotate around={140:(-5,5)}] (-5,5) arc(0:360: .5cm and 1.3cm);
\draw (-2.2,3) arc(0:360: 1.3cm and 0.5cm);

\draw [fill](-2,4.5)circle [radius = 0.1];
\draw [fill](-2.5,5)circle [radius = 0.1];
\draw [fill](-4,5.2)circle [radius = 0.1];
\draw [fill](-4.3,4.8)circle [radius = 0.1];
\draw [fill](-4.7,4.2)circle [radius = 0.1];
\draw [fill](-3.5,3)circle [radius = 0.1];
\draw [fill](-4,3)circle [radius = 0.1];

\node at(-5.1, 4.2){$v_1$};
\node at(-4.4, 5.2){$v_3$};
\node at(-4.7, 4.8){$v_2$};
\node at(-1.9, 4.1){$w_1$};
\node at(-2.2, 4.8){$w_2$};
\node at(-3.5, 2.7){$u_2$};
\node at(-4, 2.7){$u_1$};

\draw  (-4,3) -- (-4,5.2);
\draw  (-2.5,5) -- (-3.5,3);
\draw (-2.5,5) -- (-4,5.2);
\draw  (-4,3) -- (-4.3,4.8);
\draw  (-4.7,4.2) -- (-2,4.5);
\draw  [red](-3.5,3) -- (-2,4.5);
\draw  (-4,3) -- (-2,4.5);
\draw  (-4.7,4.2) -- (-4,3);
\draw  (-4.3,4.8) -- (-3.5,3);
\node[right] at(-3.8,2){$G_6$};
\end{tikzpicture}
\end{minipage}
\begin{minipage}{4cm}
\begin{tikzpicture}[scale=0.8]

\draw[rotate around={40:(-2,5)}] (-2,5) arc(0:360: .5cm and 1.3cm);
\draw[rotate around={140:(-5,5)}] (-5,5) arc(0:360: .5cm and 1.3cm);
\draw (-2.2,3) arc(0:360: 1.3cm and 0.5cm);

\draw [fill](-2,4.5)circle [radius = 0.1];
\draw [fill](-2.5,5)circle [radius = 0.1];
\draw [fill](-4,5.2)circle [radius = 0.1];
\draw [fill](-4.3,4.8)circle [radius = 0.1];
\draw [fill](-4.7,4.2)circle [radius = 0.1];
\draw [fill](-3.5,3)circle [radius = 0.1];
\draw [fill](-4,3)circle [radius = 0.1];

\draw  (-3.5,3) -- (-4,5.2);
\draw  (-2.5,5) -- (-4,3);
\draw (-2.5,5) -- (-4,5.2);

\draw  (-4,3) -- (-4.3,4.8);
\draw  (-4.7,4.2) -- (-2,4.5);
\draw  [red](-4.3,4.8) -- (-2,4.5);
\draw  (-4,3) -- (-2,4.5);
\draw  (-4.7,4.2) -- (-4,3);
\draw  (-4.3,4.8) -- (-3.5,3);
\node[right] at(-3.8,2){$G_7$};
\end{tikzpicture}
\end{minipage}
\begin{minipage}{4cm}
\begin{tikzpicture}[scale=0.8]
\draw[rotate around={40:(-7,-5)}] (-7,-5) arc(0:360: .5cm and 1.3cm);
\draw[rotate around={140:(-10,-5)}] (-10,-5) arc(0:360: .5cm and 1.3cm);
\draw (-7.2,-7) arc(0:360: 1.3cm and 0.5cm);

\draw [fill](-7,-6)circle [radius = 0.1];
\draw [fill](-7.8,-5)circle [radius = 0.1];
\draw [fill](-7.4,-5.5)circle [radius = 0.1];
\draw [fill](-10,-6)circle [radius = 0.1];
\draw [fill](-9.2,-5)circle [radius = 0.1];
\draw [fill](-9.6,-5.5)circle [radius = 0.1];
\draw [fill](-8,-7)circle [radius = 0.1];
\draw [fill](-9,-7)circle [radius = 0.1];

\draw  (-7.8,-5) -- (-9.6,-5.5);
\draw  (-7.4,-5.5) -- (-9.2,-5);
\draw  (-7,-6) -- (-9.2,-5);
\draw  (-7.8,-5) -- (-10,-6);
\draw  (-8,-7) -- (-9.6,-5.5);
\draw  [red](-7.4,-5.5) -- (-8,-7);
\draw  (-7.4,-5.5) -- (-9,-7);
\draw  (-7,-6) -- (-8,-7);
\draw  (-9,-7) -- (-10,-6);
\node[right] at(-8.8,-8){$G_8$};
\end{tikzpicture}
\end{minipage}
\caption{}
\label{l7}
\end{figure}

Suppose there is a rainbow copy of $G_6$ in 
$K_{n_1,n_2,n_3}$, see Figure \ref{l7}. If $v_2u_1w_1u_2v_2$ is not rainbow, then $u_2w_1$ shares the same color with one of $v_2u_1$, $u_1w_1$ and $u_2v_2$. Hence, $v_1u_1v_3w_2u_2w_1v_1$ is a rainbow copy of $G_4$, by Case $1$, we can find a rainbow copy of $C_{4}^{\text{multi}}$. Similarly, with the help of the red edge showed in $G_7$ and $G_8$, see Figure \ref{l7}, one can 
always find a rainbow copy of $C_{4}^{\text{multi}}$ if there is a rainbow copy of $G_7$ or $G_8$.
\end{proof}



Now we are able to prove Theorem \ref{arC4}.

\begin{proof} [Proof of Theorem \ref{arC4}]

{\bf Lower bound:} We color the edges of $K_{n_{1},n_{2},n_{3}}$ as follows.  First, color all edges between $V_{1}$ and $V_{2}$ rainbow. Second, for each vertex $v\in V_{3}$, color all the edges between $v$ and $V_{1}$ with one new distinct color. Finally, we assign a new color to all edges between $V_{2}$ and $V_{3}$. In such way, we use exactly $n_{1}n_{2}+n_{3}+1$ colors, and there is no rainbow $C_{4}^{\text{multi}}$.

{\bf Upper bound:} We prove the upper bound by induction on $n_{1}+n_{2}+n_{3}$. By Theorem \ref{exC4}, we have $\ar(K_{n_{1},n_{2},1}, C_{4}^{\text{multi}})\le \ex(K_{n_{1},n_{2},1}, C_{4}^{\text{multi}})=n_{1}n_{2}+2$, the conclusion holds for $n_{3}=1$. Let $n_{3}\ge 2$, suppose the conclusion holds for all integers less than $n_{1}+n_{2}+n_{3}$. We suppose there exists an $(n_{1}n_{2}+n_{3}+2)$-edge-coloring $c$ of $K_{n_{1},n_{2},n_{3}}$ such that there is no rainbow $C_{4}^{\text{multi}}$ in it. We take a representing subgraph $G$.

{\bf Claim 1.} $G$ contains two vertex-disjoint triangles.

{\bf Proof of Claim 1.} By Theorem \ref{exC3C4}, $\ex(K_{n_{1},n_{2},n_{3}}, \{C_{3}, C_{4}^{\text{multi}}\})=n_{1}n_{2}+n_{3}$. Since $e(G)=n_{1}n_{2}+n_{3}+2$ and $G$ contains no $C_{4}^{\text{multi}}$, $G$ contains at least two triangles $T_{1}$ and $T_{2}$. If $|V(T_{1})\cap V(T_{2})|=2$, then $T_{1}\cup T_{2}$ contains a $C_{4}^{\text{multi}}$, a contradiction. If $|V(T_{1})\cap V(T_{2})|=1$, then $T_{1}\cup T_{2}$ is a copy of $C_{3}\wedge C_{3}$. By Lemma \ref{lemma2}, we can find a rainbow $C_{4}^{\text{multi}}$, a contradiction. Thus, $T_{1}$ and $T_{2}$ are vertex-disjoint. \q

Let the two vertex-disjoint triangles be  $T_{1}=x_{1}y_{1}z_{1}x_{1}$ and $T_{2}=x_{2}y_{2}z_{2}x_{2}$, where $\{x_{1}, x_{2}\}\subseteq V_{1}$, $\{y_{1}, y_{2}\}\subseteq V_{2}$ and $\{z_{1}, z_{2}\}\subseteq V_{3}$. Denote $V_{0}=\{x_{1}, x_{2}, y_{1}, y_{2}, z_{1}, z_{2}\}$ and $U=(V_{1}\cup V_{2}\cup V_{3})\setminus V_{0}$.

{\bf Claim 2.} $e(G[V_{0}])\le 7$.

{\bf Proof of Claim 2.}  If $e(G[V_{0}])\ge 8$, then $e(V(T_{1}), V(T_{2}))\ge2$. Without loss of generality, assume that $x_{1}y_{2}\in E(G)$, we claim that $x_{1}z_{2}, x_{2}z_{1}, y_{1}z_{2}, y_{2}z_{1}\notin E(G)$, otherwise $x_{1}y_{2}x_{2}z_{2}x_{1}$, $x_{1}y_{2}x_{2}z_{1}x_{1}$, $x_{1}y_{2}z_{2}y_{1}x_{1}$ or $x_{1}y_{2}z_{1}y_{1}x_{1}$ would be a rainbow $C_{4}^{\text{multi}}$.
Thus, we have $x_{2}y_{1}\in E(G)$. We claim that $c(y_{1}z_{2})=c(y_{2}z_{2})$, otherwise at least one of $\{x_{1}y_{1}z_{2}y_{2}x_{1}, x_{2}y_{1}z_{2}y_{2}x_{2}\}$ is a rainbow $C_{4}^{\text{multi}}$. Thus, $G[V_{0}]-y_{2}z_{2}+y_{1}z_{2}$ is rainbow and contains a $C_{3}\wedge C_{3}$. By Lemma \ref{lemma2}, we find a rainbow $C_{4}^{\text{multi}}$, a contradiction. \q

If $U=\0$, that is $n_{1}=n_{2}=n_{3}=2$, then $8=e(G)=e(G[V_{0}])\le 7$, by Claim 2, a contradiction. Thus we may assume that $U\ne \0$.

{\bf Claim 3.} For all $v\in U$, $e(v,V_{0})\le 2.$

{\bf Proof of Claim 3.}  If there is a vertex $v\in U$, such that $e_{G}(v,V_{0})\ge 3$, then $G[V_{0}\cup \{v\}]$ contains a $C_{4}^{\text{multi}}$, a contradiction. \q

{\bf Claim 4.} $n_{3}\ge 3.$

{\bf Proof of Claim 4.} Suppose $n_{3}=2.$ Since $U\ne \0$, we have $n_{1}\ge 3=n_{3}+1$. If there is a vertex $v\in V_{1}$ such that $d(v)\le n_{2}$, then $e(G-v)= n_{1}n_{2}+n_{3}+2-d(v)\ge (n_{1}-1)n_{2}+n_{3}+2.$ By the induction hypothesis, we have $$|C(K_{n_{1},n_{2},n_{3}}-v)|\ge e(G-v)\ge (n_{1}-1)n_{2}+n_{3}+2= \ar(K_{n_{1}-1,n_{2},n_{3}}, C_{4}^{\text{multi}})+1,$$
thus $K_{n_{1},n_{2},n_{3}}-v$ contains a rainbow $C_{4}^{\text{multi}}$, a contradiction.
Thus we assume that $d(v)\ge n_{2}+1$ for all $v\in V_{1}$. By Claim 1, we have $e(V_{2}, V_{3})\ge 2$. Hence, we have $$e(G)=e(V_{1}, V_{2}\cup V_{3})+e(V_{2},V_{3})=\sum_{v\in V_{1}}d(v)+e(V_{2},V_{3})\ge n_{1}(n_{2}+1)+2=n_{1}n_{2}+n_{1}+2,$$
and this contradicts to the fact that $e(G)=n_{1}n_{2}+n_{3}+2$.\q

{\bf Claim 5.} $e(G[V_{0}])+e(V_{0}, U)\ge 2n_{1}+2n_{2}-1$.

{\bf Proof of Claim 5.} If $e(G[V_{0}])+e(V_{0}, U)\le 2n_{1}+2n_{2}-2$, then
$$e(G[U])=e(G)-(e(G[V_{0}])+e(V_{0}, U))\ge n_{1}n_{2}+n_{3}+2-(2n_{1}+2n_{2}-2)= (n_{1}-2)(n_{2}-2)+(n_{3}-2)+2.$$
By the induction hypothesis, we have
$$|C(K_{n_{1},n_{2},n_{3}}-V_{0})|\ge e(G[U])\ge  (n_{1}-2)(n_{2}-2)+(n_{3}-2)+2=\ar(K_{n_{1}-2,n_{2}-2,n_{3}-2},C_{4}^{\text{multi}})+1,$$
thus $K_{n_{1},n_{2},n_{3}}-V_{0}$ contains a rainbow $C_{4}^{\text{multi}}$, a contradiction. \q

Denote $U_{0}=\{v\in U: e(v,V_{0})=2\}$. By Claim 3, we have $e(U,V_{0})\le |U_{0}|+|U|$. By Claim 2, we just need to consider the following two cases.

{\bf Case 1.} $e(G[V_{0}])=7$.

By Claim 5, we have $e(U,V_{0})\ge 2n_{1}+2n_{2}-1-e(G[V_{0}])=2n_{1}+2n_{2}-8$.
Since $|U|=n_{1}+n_{2}+n_{3}-6$ and $e(U,V_{0})\le |U_{0}|+|U|$, we have $|U_{0}|\ge n_{1}+n_{2}-n_{3}-2\ge 1$. Let $v\in U_{0}$, then the orange edges in  $G[V_{0}\cup \{v\}]$ (see Figure \ref{l8}) forms one subgraph in $\mathcal{F}$ (see Figure \ref{F}). By Lemma \ref{lemma2}, there is a rainbow  $C_{4}^{\text{multi}}$, a contradiction.

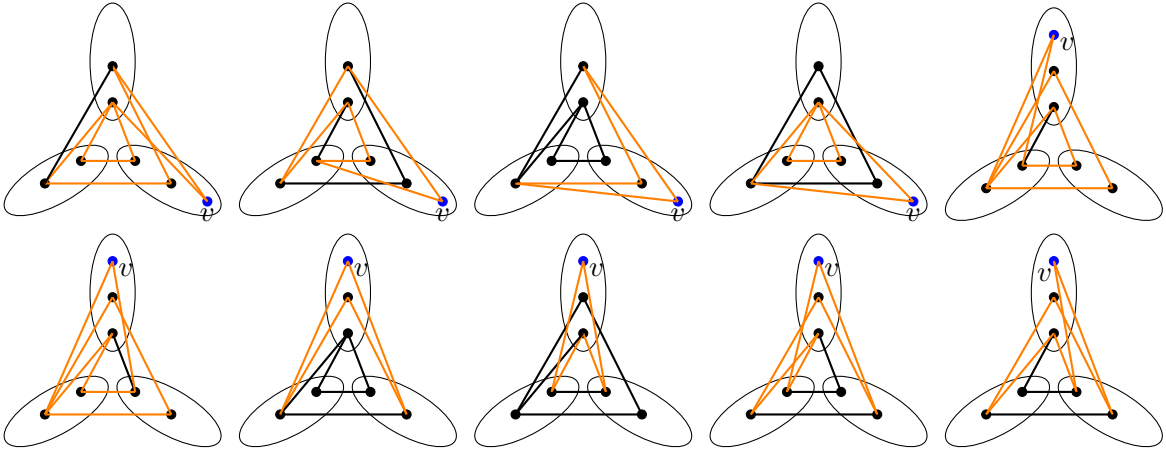
\begin{figure}[h] 
\begin{center}
\begin{minipage}{3cm}
\begin{tikzpicture}[scale=0.6]

\draw[rotate around={60:(-17,5)}] (-17,5) arc(0:360: .5cm and 1.3cm);
\draw[rotate around={120:(-20,5)}] (-20,5) arc(0:360: .5cm and 1.3cm);
\draw[rotate around={90:(-18.5,8.5)}](-18.5,8.5) arc(0:360: 1.3cm and 0.5cm);

\draw [fill](-18.5,7.1)circle [radius = 0.1];
\draw [fill](-18.5,6.3)circle [radius = 0.1];

\draw [fill](-17.2,4.5)circle [radius = 0.1];
\draw [fill](-18,5)circle [radius = 0.1];

\draw [fill](-20,4.5)circle [radius = 0.1];
\draw [fill](-19.2,5)circle [radius = 0.1];

\draw [blue,fill](-16.4,4.1)circle [radius = 0.1];

\node at(-16.4, 3.8) {$v$};

\draw [orange,thick] (-18.5,7.1) -- (-17.2,4.5);
\draw [orange,thick](-18.5,7.1) -- (-16.4,4.1);
\draw [orange,thick] (-16.4,4.1) -- (-18.5,6.3);
\draw [orange,thick] (-17.2,4.5) -- (-20,4.5);
\draw [orange,thick] (-18.5,6.3) -- (-20,4.5);
\draw [orange,thick] (-18.5,6.3) -- (-18,5);
\draw [orange,thick] (-18.5,6.3) -- (-19.2,5);
\draw [orange,thick] (-18,5) -- (-19.2,5);
\draw [thick] (-18.5,7.1) -- (-20,4.5);
\end{tikzpicture}
\end{minipage}
\begin{minipage}{3cm}
\begin{tikzpicture}[scale=0.6]

\draw[rotate around={60:(-17,5)}] (-17,5) arc(0:360: .5cm and 1.3cm);
\draw[rotate around={120:(-20,5)}] (-20,5) arc(0:360: .5cm and 1.3cm);
\draw[rotate around={90:(-18.5,8.5)}](-18.5,8.5) arc(0:360: 1.3cm and 0.5cm);

\draw [fill](-18.5,7.1)circle [radius = 0.1];
\draw [fill](-18.5,6.3)circle [radius = 0.1];

\draw [fill](-17.2,4.5)circle [radius = 0.1];
\draw [fill](-18,5)circle [radius = 0.1];

\draw [fill](-20,4.5)circle [radius = 0.1];
\draw [fill](-19.2,5)circle [radius = 0.1];

\draw [blue,fill](-16.4,4.1)circle [radius = 0.1];

\node at(-16.4, 3.8) {$v$};

\draw [thick] (-18.5,7.1) -- (-17.2,4.5);
\draw [orange,thick] (-18.5,7.1) -- (-20,4.5);
\draw  [thick](-17.2,4.5) -- (-20,4.5);

\draw [orange,thick](-18.5,6.3) -- (-18,5);
\draw [thick] (-18.5,6.3) -- (-19.2,5);
\draw  [orange,thick](-18,5) -- (-19.2,5);

\draw [orange,thick](-18.5,7.1) -- (-16.4,4.1);
\draw [orange,thick] (-18.5,6.3) -- (-20,4.5);
\draw [orange,thick] (-16.4,4.1) -- (-19.2,5);
\end{tikzpicture}
\end{minipage}
\begin{minipage}{3cm}
\begin{tikzpicture}[scale=0.6]

\draw[rotate around={60:(-17,5)}] (-17,5) arc(0:360: .5cm and 1.3cm);
\draw[rotate around={120:(-20,5)}] (-20,5) arc(0:360: .5cm and 1.3cm);
\draw[rotate around={90:(-18.5,8.5)}](-18.5,8.5) arc(0:360: 1.3cm and 0.5cm);

\draw [fill](-18.5,7.1)circle [radius = 0.1];
\draw [fill](-18.5,6.3)circle [radius = 0.1];

\draw [fill](-17.2,4.5)circle [radius = 0.1];
\draw [fill](-18,5)circle [radius = 0.1];

\draw [fill](-20,4.5)circle [radius = 0.1];
\draw [fill](-19.2,5)circle [radius = 0.1];

\draw [blue,fill](-16.4,4.1)circle [radius = 0.1];

\node at(-16.4, 3.8) {$v$};

\draw [orange, thick] (-18.5,7.1) -- (-17.2,4.5);
\draw [thick](-18.5,7.1) -- (-20,4.5);
\draw  [orange, thick](-17.2,4.5) -- (-20,4.5);

\draw [thick] (-18.5,6.3) -- (-18,5);
\draw  [thick] (-18.5,6.3) -- (-19.2,5);
\draw [thick] (-18,5) -- (-19.2,5);

\draw [orange, thick](-18.5,7.1) -- (-16.4,4.1);
\draw [black, thick] (-18.5,6.3) -- (-20,4.5);
\draw [orange, thick] (-16.4,4.1) -- (-20,4.5);
\end{tikzpicture}
\end{minipage}
\begin{minipage}{3cm}
\begin{tikzpicture}[scale=0.6]

\draw[rotate around={60:(-17,5)}] (-17,5) arc(0:360: .5cm and 1.3cm);
\draw[rotate around={120:(-20,5)}] (-20,5) arc(0:360: .5cm and 1.3cm);
\draw[rotate around={90:(-18.5,8.5)}](-18.5,8.5) arc(0:360: 1.3cm and 0.5cm);

\draw [fill](-18.5,7.1)circle [radius = 0.1];
\draw [fill](-18.5,6.3)circle [radius = 0.1];

\draw [fill](-17.2,4.5)circle [radius = 0.1];
\draw [fill](-18,5)circle [radius = 0.1];

\draw [fill](-20,4.5)circle [radius = 0.1];
\draw [fill](-19.2,5)circle [radius = 0.1];

\draw [blue,fill](-16.4,4.1)circle [radius = 0.1];

\node at(-16.4, 3.8) {$v$};

\draw [ thick] (-18.5,7.1) -- (-17.2,4.5);
\draw [thick] (-18.5,7.1) -- (-20,4.5);
\draw [thick] (-17.2,4.5) -- (-20,4.5);

\draw [orange,thick](-18.5,6.3) -- (-18,5);
\draw  [orange,thick](-18.5,6.3) -- (-19.2,5);
\draw  [orange,thick](-18,5) -- (-19.2,5);

\draw [orange,thick](-18.5,6.3) -- (-16.4,4.1);
\draw  [orange,thick](-18.5,6.3) -- (-20,4.5);
\draw [orange,thick] (-16.4,4.1) -- (-20,4.5);
\end{tikzpicture}
\end{minipage}
\begin{minipage}{3cm}
\begin{tikzpicture}[scale=0.6]

\draw[rotate around={60:(-17,5)}] (-17,5) arc(0:360: .5cm and 1.3cm);
\draw[rotate around={120:(-20,5)}] (-20,5) arc(0:360: .5cm and 1.3cm);
\draw[rotate around={90:(-18.5,8.5)}](-18.5,8.5) arc(0:360: 1.3cm and 0.5cm);

\draw [fill](-18.5,7.1)circle [radius = 0.1];
\draw [fill](-18.5,6.3)circle [radius = 0.1];

\draw [fill](-17.2,4.5)circle [radius = 0.1];
\draw [fill](-18,5)circle [radius = 0.1];

\draw [fill](-20,4.5)circle [radius = 0.1];
\draw [fill](-19.2,5)circle [radius = 0.1];

\draw [blue,fill](-18.5,7.9)circle [radius = 0.1];

\node at(-18.2, 7.7) {$v$};

\draw  [orange,thick](-18.5,7.1) -- (-17.2,4.5);
\draw  [orange,thick](-18.5,7.1) -- (-20,4.5);
\draw  [orange,thick](-17.2,4.5) -- (-20,4.5);

\draw [orange,thick](-18.5,6.3) -- (-18,5);
\draw [ thick] (-18.5,6.3) -- (-19.2,5);
\draw  [orange,thick](-18,5) -- (-19.2,5);

\draw [orange,thick](-19.2,5) -- (-18.5,7.9);
\draw  [orange,thick](-18.5,6.3) -- (-20,4.5);
\draw [orange,thick] (-18.5,7.9) -- (-20,4.5);
\end{tikzpicture}
\end{minipage}\\
\begin{minipage}{3cm}
\begin{tikzpicture}[scale=0.6]

\draw[rotate around={60:(-17,5)}] (-17,5) arc(0:360: .5cm and 1.3cm);
\draw[rotate around={120:(-20,5)}] (-20,5) arc(0:360: .5cm and 1.3cm);
\draw[rotate around={90:(-18.5,8.5)}](-18.5,8.5) arc(0:360: 1.3cm and 0.5cm);

\draw [fill](-18.5,7.1)circle [radius = 0.1];
\draw [fill](-18.5,6.3)circle [radius = 0.1];

\draw [fill](-17.2,4.5)circle [radius = 0.1];
\draw [fill](-18,5)circle [radius = 0.1];

\draw [fill](-20,4.5)circle [radius = 0.1];
\draw [fill](-19.2,5)circle [radius = 0.1];

\draw [blue,fill](-18.5,7.9)circle [radius = 0.1];

\node at(-18.2, 7.7) {$v$};

\draw  [orange,thick](-18.5,7.1) -- (-17.2,4.5);
\draw  [orange,thick](-18.5,7.1) -- (-20,4.5);
\draw  [orange,thick](-17.2,4.5) -- (-20,4.5);

\draw [ thick] (-18.5,6.3) -- (-18,5);
\draw  [orange,thick](-18.5,6.3) -- (-19.2,5);
\draw  [orange,thick](-18,5) -- (-19.2,5);

\draw [orange,thick](-18,5) -- (-18.5,7.9);
\draw  [orange,thick](-18.5,6.3) -- (-20,4.5);
\draw [orange,thick] (-18.5,7.9) -- (-20,4.5);
\end{tikzpicture}
\end{minipage}
\begin{minipage}{3cm}
\begin{tikzpicture}[scale=0.6]

\draw[rotate around={60:(-17,5)}] (-17,5) arc(0:360: .5cm and 1.3cm);
\draw[rotate around={120:(-20,5)}] (-20,5) arc(0:360: .5cm and 1.3cm);
\draw[rotate around={90:(-18.5,8.5)}](-18.5,8.5) arc(0:360: 1.3cm and 0.5cm);

\draw [fill](-18.5,7.1)circle [radius = 0.1];
\draw [fill](-18.5,6.3)circle [radius = 0.1];

\draw [fill](-17.2,4.5)circle [radius = 0.1];
\draw [fill](-18,5)circle [radius = 0.1];

\draw [fill](-20,4.5)circle [radius = 0.1];
\draw [fill](-19.2,5)circle [radius = 0.1];

\draw [blue,fill](-18.5,7.9)circle [radius = 0.1];
\node at(-18.2, 7.7) {$v$};

\draw [orange,thick](-18.5,7.1) -- (-17.2,4.5);
\draw  [orange,thick](-18.5,7.1) -- (-20,4.5);
\draw [thick] (-17.2,4.5) -- (-20,4.5);

\draw [thick](-18.5,6.3) -- (-18,5);
\draw  [thick](-18.5,6.3) -- (-19.2,5);
\draw [thick] (-18,5) -- (-19.2,5);

\draw [orange,thick](-17.2,4.5) -- (-18.5,7.9);
\draw [thick] (-18.5,6.3) -- (-20,4.5);
\draw [orange,thick] (-18.5,7.9) -- (-20,4.5);
\end{tikzpicture}
\end{minipage}
\begin{minipage}{3cm}
\begin{tikzpicture}[scale=0.6]

\draw[rotate around={60:(-17,5)}] (-17,5) arc(0:360: .5cm and 1.3cm);
\draw[rotate around={120:(-20,5)}] (-20,5) arc(0:360: .5cm and 1.3cm);
\draw[rotate around={90:(-18.5,8.5)}](-18.5,8.5) arc(0:360: 1.3cm and 0.5cm);

\draw [fill](-18.5,7.1)circle [radius = 0.1];
\draw [fill](-18.5,6.3)circle [radius = 0.1];

\draw [fill](-17.2,4.5)circle [radius = 0.1];
\draw [fill](-18,5)circle [radius = 0.1];

\draw [fill](-20,4.5)circle [radius = 0.1];
\draw [fill](-19.2,5)circle [radius = 0.1];

\draw [blue,fill](-18.5,7.9)circle [radius = 0.1];

\node at(-18.2, 7.7) {$v$};

\draw [thick](-18.5,7.1) -- (-17.2,4.5);
\draw  [thick](-18.5,7.1) -- (-20,4.5);
\draw  [thick](-17.2,4.5) -- (-20,4.5);

\draw [orange,thick](-18.5,6.3) -- (-18,5);
\draw   [orange,thick](-18.5,6.3) -- (-19.2,5);
\draw [thick] (-18,5) -- (-19.2,5);

\draw [orange,thick](-18,5) -- (-18.5,7.9);
\draw  [thick](-18.5,6.3) -- (-20,4.5);
\draw [orange,thick] (-18.5,7.9) -- (-19.2,5);
\end{tikzpicture}
\end{minipage}
\begin{minipage}{3cm}
\begin{tikzpicture}[scale=0.6]

\draw[rotate around={60:(-17,5)}] (-17,5) arc(0:360: .5cm and 1.3cm);
\draw[rotate around={120:(-20,5)}] (-20,5) arc(0:360: .5cm and 1.3cm);
\draw[rotate around={90:(-18.5,8.5)}](-18.5,8.5) arc(0:360: 1.3cm and 0.5cm);

\draw [fill](-18.5,7.1)circle [radius = 0.1];
\draw [fill](-18.5,6.3)circle [radius = 0.1];

\draw [fill](-17.2,4.5)circle [radius = 0.1];
\draw [fill](-18,5)circle [radius = 0.1];

\draw [fill](-20,4.5)circle [radius = 0.1];
\draw [fill](-19.2,5)circle [radius = 0.1];

\draw [blue,fill](-18.5,7.9)circle [radius = 0.1];

\node at(-18.2, 7.7) {$v$};

\draw    [orange,thick](-18.5,7.1) -- (-17.2,4.5);
\draw    [orange,thick](-18.5,7.1) -- (-20,4.5);
\draw [thick] (-17.2,4.5) -- (-20,4.5);

\draw [thick] (-18.5,6.3) -- (-18,5);
\draw [orange,thick] (-18.5,6.3) -- (-19.2,5);
\draw [thick] (-18,5) -- (-19.2,5);

\draw [orange,thick](-19.2,5) -- (-18.5,7.9);
\draw [orange,thick] (-18.5,6.3) -- (-20,4.5);
\draw [orange,thick] (-18.5,7.9) -- (-17.2,4.5);
\end{tikzpicture}
\end{minipage}
\begin{minipage}{3cm}
\begin{tikzpicture}[scale=0.6]

\draw[rotate around={60:(-17,5)}] (-17,5) arc(0:360: .5cm and 1.3cm);
\draw[rotate around={120:(-20,5)}] (-20,5) arc(0:360: .5cm and 1.3cm);
\draw[rotate around={90:(-18.5,8.5)}](-18.5,8.5) arc(0:360: 1.3cm and 0.5cm);

\draw [fill](-18.5,7.1)circle [radius = 0.1];
\draw [fill](-18.5,6.3)circle [radius = 0.1];

\draw [fill](-17.2,4.5)circle [radius = 0.1];
\draw [fill](-18,5)circle [radius = 0.1];

\draw [fill](-20,4.5)circle [radius = 0.1];
\draw [fill](-19.2,5)circle [radius = 0.1];

\draw [blue,fill](-18.5,7.9)circle [radius = 0.1];

\node at(-18.7, 7.6) {$v$};

\draw[orange,thick](-18.5,7.1) -- (-17.2,4.5);
\draw  [orange,thick](-18.5,7.1) -- (-20,4.5);
\draw [thick]  (-17.2,4.5) -- (-20,4.5);

\draw [orange,thick](-18.5,6.3) -- (-18,5);
\draw  [thick] (-18.5,6.3) -- (-19.2,5);
\draw  [thick](-18,5) -- (-19.2,5);

\draw [orange,thick](-17.2,4.5) -- (-18.5,7.9);
\draw  [orange,thick](-18.5,6.3) -- (-20,4.5);
\draw [orange,thick] (-18.5,7.9) -- (-18,5);
\end{tikzpicture}
\end{minipage}
\caption{ Illustration of
$G[V_{0}\cup \{v\}]$.} 
\label{l8}

\end{center}
\end{figure}

{\bf Case 2.} $e(G[V_{0}])=6$.

\begin{figure}[h] 
\begin{center}
\begin{minipage}{3cm}
\begin{tikzpicture}[scale=0.6]

\draw[rotate around={60:(-17,5)}] (-17,5) arc(0:360: .5cm and 1.3cm);
\draw[rotate around={120:(-20,5)}] (-20,5) arc(0:360: .5cm and 1.3cm);
\draw[rotate around={90:(-18.5,8.5)}](-18.5,8.5) arc(0:360: 1.3cm and 0.5cm);

\draw [fill](-18.5,7.1)circle [radius = 0.1];
\draw [fill](-18.5,6.3)circle [radius = 0.1];

\draw [fill](-17.2,4.5)circle [radius = 0.1];
\draw [fill](-18,5)circle [radius = 0.1];

\draw [fill](-20,4.5)circle [radius = 0.1];
\draw [fill](-19.2,5)circle [radius = 0.1];

\draw [blue,fill](-18.5,7.9)circle [radius = 0.1];
\draw [blue,fill](-16.4,4)circle [radius = 0.1];

\node at(-18,7.8) {$v_1$};

\node at(-16.4,3.6) {$v_2$};

\draw [thick] (-18.5,7.1) -- (-17.2,4.5);
\draw  [orange,thick](-18.5,7.1) -- (-20,4.5);
\draw [thick] (-17.2,4.5) -- (-20,4.5);

\draw [thick](-18.5,6.3) -- (-18,5);
\draw  [orange,thick](-18.5,6.3) -- (-19.2,5);
\draw [thick] (-18,5) -- (-19.2,5);

\draw  [orange,thick](-19.2,5) -- (-18.5,7.9);
\draw  [orange,thick] (-18.5,7.9) -- (-20,4.5);

\draw   [orange,thick](-18.5,6.3) -- (-16.4,4);
\draw   [orange,thick](-18.5,7.1) -- (-16.4,4);
\end{tikzpicture}
\end{minipage}
\begin{minipage}{3cm}
\begin{tikzpicture}[scale=0.6]

\draw[rotate around={60:(-17,5)}] (-17,5) arc(0:360: .5cm and 1.3cm);
\draw[rotate around={120:(-20,5)}] (-20,5) arc(0:360: .5cm and 1.3cm);
\draw[rotate around={90:(-18.5,8.5)}](-18.5,8.5) arc(0:360: 1.3cm and 0.5cm);

\draw [fill](-18.5,7.1)circle [radius = 0.1];
\draw [fill](-18.5,6.3)circle [radius = 0.1];

\draw [fill](-17.2,4.5)circle [radius = 0.1];
\draw [fill](-18,5)circle [radius = 0.1];

\draw [fill](-20,4.5)circle [radius = 0.1];
\draw [fill](-19.2,5)circle [radius = 0.1];

\draw [blue,fill](-18.5,7.9)circle [radius = 0.1];
\draw [blue,fill](-16.4,4)circle [radius = 0.1];

\node at(-18,7.8) {$v_1$};

\node at(-16.4,3.6) {$v_2$};
\draw [thick] (-18.5,7.1) -- (-17.2,4.5);
\draw [thick] (-18.5,7.1) -- (-20,4.5);
\draw [thick] (-17.2,4.5) -- (-20,4.5);

\draw [thick](-18.5,6.3) -- (-18,5);
\draw [thick] (-18.5,6.3) -- (-19.2,5);
\draw [thick] (-18,5) -- (-19.2,5);

\draw [orange,thick](-19.2,5) -- (-18.5,7.9);
\draw [orange,thick](-18.5,7.9) -- (-20,4.5);

\draw  [orange,thick](-20,4.5) -- (-16.4,4);
\draw [orange,thick](-19.2,5) -- (-16.4,4);
\end{tikzpicture}
\end{minipage}
\begin{minipage}{3cm}
\begin{tikzpicture}[scale=0.6]

\draw[rotate around={60:(-17,5)}] (-17,5) arc(0:360: .5cm and 1.3cm);
\draw[rotate around={120:(-20,5)}] (-20,5) arc(0:360: .5cm and 1.3cm);
\draw[rotate around={90:(-18.5,8.5)}](-18.5,8.5) arc(0:360: 1.3cm and 0.5cm);

\draw [fill](-18.5,7.1)circle [radius = 0.1];
\draw [fill](-18.5,6.3)circle [radius = 0.1];

\draw [fill](-17.2,4.5)circle [radius = 0.1];
\draw [fill](-18,5)circle [radius = 0.1];

\draw [fill](-20,4.5)circle [radius = 0.1];
\draw [fill](-19.2,5)circle [radius = 0.1];

\draw [blue,fill](-18.5,7.9)circle [radius = 0.1];
\draw [blue,fill](-16.4,4)circle [radius = 0.1];

\node at(-18,7.8) {$v_1$};

\node at(-16.4,3.6) {$v_2$};

\draw [thick] (-18.5,7.1) -- (-17.2,4.5);
\draw  [orange,thick](-18.5,7.1) -- (-20,4.5);
\draw [thick] (-17.2,4.5) -- (-20,4.5);

\draw  [orange,thick](-18.5,6.3) -- (-18,5);
\draw  [orange,thick](-18.5,6.3) -- (-19.2,5);
\draw  [orange,thick](-18,5) -- (-19.2,5);

\draw  [orange,thick](-19.2,5) -- (-18.5,7.9);
\draw   [orange,thick](-18.5,7.9) -- (-20,4.5);

\draw  [orange,thick](-18.5,7.1) -- (-16.4,4);
\draw  [orange,thick](-19.2,5) -- (-16.4,4);
\end{tikzpicture}
\end{minipage}
\begin{minipage}{3cm}
\begin{tikzpicture}[scale=0.6]

\draw[rotate around={60:(-17,5)}] (-17,5) arc(0:360: .5cm and 1.3cm);
\draw[rotate around={120:(-20,5)}] (-20,5) arc(0:360: .5cm and 1.3cm);
\draw[rotate around={90:(-18.5,8.5)}](-18.5,8.5) arc(0:360: 1.3cm and 0.5cm);

\draw [fill](-18.5,7.1)circle [radius = 0.1];
\draw [fill](-18.5,6.3)circle [radius = 0.1];

\draw [fill](-17.2,4.5)circle [radius = 0.1];
\draw [fill](-18,5)circle [radius = 0.1];

\draw [fill](-20,4.5)circle [radius = 0.1];
\draw [fill](-19.2,5)circle [radius = 0.1];

\draw [blue,fill](-18.5,7.9)circle [radius = 0.1];
\draw [blue,fill](-16.4,4)circle [radius = 0.1];

\node at(-18,7.8) {$v_1$};

\node at(-16.4,3.6) {$v_2$};

\draw [thick] (-18.5,7.1) -- (-17.2,4.5);
\draw [thick] (-18.5,7.1) -- (-20,4.5);
\draw [thick] (-17.2,4.5) -- (-20,4.5);

\draw [orange,thick](-18.5,6.3) -- (-18,5);
\draw [thick] (-18.5,6.3) -- (-19.2,5);
\draw  [orange,thick](-18,5) -- (-19.2,5);

\draw[thick] (-19.2,5) -- (-18.5,7.9);
\draw [thick] (-18.5,7.9) -- (-20,4.5);

\draw  [orange,thick](-18.5,6.3) -- (-16.4,4);
\draw  [orange,thick](-19.2,5) -- (-16.4,4);
\end{tikzpicture}
\end{minipage}
\begin{minipage}{3cm}
\begin{tikzpicture}[scale=0.6]

\draw[rotate around={60:(-17,5)}] (-17,5) arc(0:360: .5cm and 1.3cm);
\draw[rotate around={120:(-20,5)}] (-20,5) arc(0:360: .5cm and 1.3cm);
\draw[rotate around={90:(-18.5,8.5)}](-18.5,8.5) arc(0:360: 1.3cm and 0.5cm);

\draw [fill](-18.5,7.1)circle [radius = 0.1];
\draw [fill](-18.5,6.3)circle [radius = 0.1];

\draw [fill](-17.2,4.5)circle [radius = 0.1];
\draw [fill](-18,5)circle [radius = 0.1];

\draw [fill](-20,4.5)circle [radius = 0.1];
\draw [fill](-19.2,5)circle [radius = 0.1];

\draw [blue,fill](-18.5,7.9)circle [radius = 0.1];
\draw [blue,fill](-16.4,4)circle [radius = 0.1];
\node at(-18,7.8) {$v_1$};

\node at(-16.4,3.6) {$v_2$};

\draw [thick] (-18.5,7.1) -- (-17.2,4.5);
\draw  [orange,thick](-18.5,7.1) -- (-20,4.5);
\draw [thick] (-17.2,4.5) -- (-20,4.5);

\draw [orange,thick](-18.5,6.3) -- (-18,5);
\draw [thick] (-18.5,6.3) -- (-19.2,5);
\draw [thick] (-18,5) -- (-19.2,5);

\draw  [orange,thick](-18,5) -- (-18.5,7.9);
\draw  [orange,thick](-18.5,7.9) -- (-20,4.5);

\draw [orange,thick](-18.5,6.3) -- (-16.4,4);
\draw  [orange,thick](-18.5,7.1) -- (-16.4,4);
\end{tikzpicture}
\end{minipage}\\
\begin{minipage}{2.5cm}
\begin{tikzpicture}[scale=0.5]

\draw[rotate around={60:(-17,5)}] (-17,5) arc(0:360: .5cm and 1.3cm);
\draw[rotate around={120:(-20,5)}] (-20,5) arc(0:360: .5cm and 1.3cm);
\draw[rotate around={90:(-18.5,8.5)}](-18.5,8.5) arc(0:360: 1.3cm and 0.5cm);

\draw [fill](-18.5,7.1)circle [radius = 0.1];
\draw [fill](-18.5,6.3)circle [radius = 0.1];

\draw [fill](-17.2,4.5)circle [radius = 0.1];
\draw [fill](-18,5)circle [radius = 0.1];

\draw [fill](-20,4.5)circle [radius = 0.1];
\draw [fill](-19.2,5)circle [radius = 0.1];

\draw [blue,fill](-18.5,7.9)circle [radius = 0.1];
\draw [blue,fill](-16.4,4)circle [radius = 0.1];
\node at(-18,7.8) {$v_1$};

\node at(-16.4,3.6) {$v_2$};

\draw [thick] (-18.5,7.1) -- (-17.2,4.5);
\draw [orange,thick](-18.5,7.1) -- (-20,4.5);
\draw [thick] (-17.2,4.5) -- (-20,4.5);

\draw [thick](-18.5,6.3) -- (-18,5);
\draw  [thick](-18.5,6.3) -- (-19.2,5);
\draw  [orange,thick] (-18,5) -- (-19.2,5);

\draw  [orange,thick](-18,5) -- (-18.5,7.9);
\draw   [orange,thick](-18.5,7.9) -- (-20,4.5);

\draw   [orange,thick](-19.2,5) -- (-16.4,4);
\draw   [orange,thick](-18.5,7.1) -- (-16.4,4);
\end{tikzpicture}
\end{minipage}
\begin{minipage}{2.5cm}
\begin{tikzpicture}[scale=0.5]

\draw[rotate around={60:(-17,5)}] (-17,5) arc(0:360: .5cm and 1.3cm);
\draw[rotate around={120:(-20,5)}] (-20,5) arc(0:360: .5cm and 1.3cm);
\draw[rotate around={90:(-18.5,8.5)}](-18.5,8.5) arc(0:360: 1.3cm and 0.5cm);

\draw [fill](-18.5,7.1)circle [radius = 0.1];
\draw [fill](-18.5,6.3)circle [radius = 0.1];

\draw [fill](-17.2,4.5)circle [radius = 0.1];
\draw [fill](-18,5)circle [radius = 0.1];

\draw [fill](-20,4.5)circle [radius = 0.1];
\draw [fill](-19.2,5)circle [radius = 0.1];

\draw [blue,fill](-18.5,7.9)circle [radius = 0.1];
\draw [blue,fill](-16.4,4)circle [radius = 0.1];

\node at(-18,7.8) {$v_1$};

\node at(-16.4,3.6) {$v_2$};

\draw [orange,thick](-18.5,7.1) -- (-17.2,4.5);
\draw  [orange,thick](-18.5,7.1) -- (-20,4.5);
\draw  [orange,thick](-17.2,4.5) -- (-20,4.5);

\draw [orange,thick](-18.5,6.3) -- (-18,5);
\draw [thick] (-18.5,6.3) -- (-19.2,5);
\draw [thick] (-18,5) -- (-19.2,5);

\draw [orange,thick](-18,5) -- (-18.5,7.9);
\draw [orange,thick](-18.5,7.9) -- (-20,4.5);

\draw [orange,thick](-20,4.5) -- (-16.4,4);
\draw [orange,thick](-18.5,6.3) -- (-16.4,4);
\end{tikzpicture}
\end{minipage}
\begin{minipage}{2.5cm}
\begin{tikzpicture}[scale=0.5]

\draw[rotate around={60:(-17,5)}] (-17,5) arc(0:360: .5cm and 1.3cm);
\draw[rotate around={120:(-20,5)}] (-20,5) arc(0:360: .5cm and 1.3cm);
\draw[rotate around={90:(-18.5,8.5)}](-18.5,8.5) arc(0:360: 1.3cm and 0.5cm);

\draw [fill](-18.5,7.1)circle [radius = 0.1];
\draw [fill](-18.5,6.3)circle [radius = 0.1];

\draw [fill](-17.2,4.5)circle [radius = 0.1];
\draw [fill](-18,5)circle [radius = 0.1];

\draw [fill](-20,4.5)circle [radius = 0.1];
\draw [fill](-19.2,5)circle [radius = 0.1];

\draw [blue,fill](-18.5,7.9)circle [radius = 0.1];
\draw [blue,fill](-16.4,4)circle [radius = 0.1];

\node at(-18,7.8) {$v_1$};

\node at(-16.4,3.6) {$v_2$};

\draw [thick] (-18.5,7.1) -- (-17.2,4.5);
\draw  [thick](-18.5,7.1) -- (-20,4.5);
\draw [thick] (-17.2,4.5) -- (-20,4.5);

\draw [orange,thick](-18.5,6.3) -- (-18,5);
\draw [thick] (-18.5,6.3) -- (-19.2,5);
\draw [orange,thick](-18,5) -- (-19.2,5);

\draw [thick](-18,5) -- (-18.5,7.9);
\draw [thick] (-18.5,7.9) -- (-20,4.5);

\draw  [orange,thick](-19.2,5) -- (-16.4,4);
\draw  [orange,thick](-18.5,6.3) -- (-16.4,4);
\end{tikzpicture}
\end{minipage}
\begin{minipage}{2.5cm}
\begin{tikzpicture}[scale=0.5]

\draw[rotate around={60:(-17,5)}] (-17,5) arc(0:360: .5cm and 1.3cm);
\draw[rotate around={120:(-20,5)}] (-20,5) arc(0:360: .5cm and 1.3cm);
\draw[rotate around={90:(-18.5,8.5)}](-18.5,8.5) arc(0:360: 1.3cm and 0.5cm);

\draw [fill](-18.5,7.1)circle [radius = 0.1];
\draw [fill](-18.5,6.3)circle [radius = 0.1];

\draw [fill](-17.2,4.5)circle [radius = 0.1];
\draw [fill](-18,5)circle [radius = 0.1];

\draw [fill](-20,4.5)circle [radius = 0.1];
\draw [fill](-19.2,5)circle [radius = 0.1];

\draw [blue,fill](-18.5,7.9)circle [radius = 0.1];
\draw [blue,fill](-16.4,4)circle [radius = 0.1];

\node at(-18,7.8) {$v_1$};

\node at(-16.4,3.6) {$v_2$};

\draw   [orange,thick](-18.5,7.1) -- (-17.2,4.5);
\draw [thick] (-18.5,7.1) -- (-20,4.5);
\draw  [orange,thick](-17.2,4.5) -- (-20,4.5);

\draw [thick](-18.5,6.3) -- (-18,5);
\draw [thick] (-18.5,6.3) -- (-19.2,5);
\draw [thick] (-18,5) -- (-19.2,5);

\draw [thick](-18,5) -- (-18.5,7.9);
\draw  [thick](-18.5,7.9) -- (-20,4.5);

\draw [orange,thick](-20,4.5) -- (-16.4,4);
\draw  [orange,thick](-18.5,7.1) -- (-16.4,4);
\end{tikzpicture}
\end{minipage}
\begin{minipage}{2.5cm}
\begin{tikzpicture}[scale=0.5]

\draw[rotate around={60:(-17,5)}] (-17,5) arc(0:360: .5cm and 1.3cm);
\draw[rotate around={120:(-20,5)}] (-20,5) arc(0:360: .5cm and 1.3cm);
\draw[rotate around={90:(-18.5,8.5)}](-18.5,8.5) arc(0:360: 1.3cm and 0.5cm);

\draw [fill](-18.5,7.1)circle [radius = 0.1];
\draw [fill](-18.5,6.3)circle [radius = 0.1];

\draw [fill](-17.2,4.5)circle [radius = 0.1];
\draw [fill](-18,5)circle [radius = 0.1];

\draw [fill](-20,4.5)circle [radius = 0.1];
\draw [fill](-19.2,5)circle [radius = 0.1];

\draw [blue,fill](-18.5,7.9)circle [radius = 0.1];
\draw [blue,fill](-16.4,4)circle [radius = 0.1];

\node at(-18,7.8) {$v_1$};

\node at(-16.4,3.6) {$v_2$};

\draw  [thick](-18.5,7.1) -- (-17.2,4.5);
\draw  [orange,thick](-18.5,7.1) -- (-20,4.5);
\draw  [orange,thick](-17.2,4.5) -- (-20,4.5);

\draw [thick](-18.5,6.3) -- (-18,5);
\draw[orange,thick](-18.5,6.3) -- (-19.2,5);
\draw  [orange,thick](-18,5) -- (-19.2,5);

\draw [orange,thick](-18,5) -- (-18.5,7.9);
\draw [orange,thick](-18.5,7.9) -- (-17.2,4.5);

\draw [orange,thick](-18.5,6.3) -- (-16.4,4);
\draw [orange,thick](-18.5,7.1) -- (-16.4,4);
\end{tikzpicture}
\end{minipage}
\begin{minipage}{2.5cm}
\begin{tikzpicture}[scale=0.5]

\draw[rotate around={60:(-17,5)}] (-17,5) arc(0:360: .5cm and 1.3cm);
\draw[rotate around={120:(-20,5)}] (-20,5) arc(0:360: .5cm and 1.3cm);
\draw[rotate around={90:(-18.5,8.5)}](-18.5,8.5) arc(0:360: 1.3cm and 0.5cm);

\draw [fill](-18.5,7.1)circle [radius = 0.1];
\draw [fill](-18.5,6.3)circle [radius = 0.1];

\draw [fill](-17.2,4.5)circle [radius = 0.1];
\draw [fill](-18,5)circle [radius = 0.1];

\draw [fill](-20,4.5)circle [radius = 0.1];
\draw [fill](-19.2,5)circle [radius = 0.1];

\draw [blue,fill](-18.5,7.9)circle [radius = 0.1];
\draw [blue,fill](-16.4,4)circle [radius = 0.1];

\node at(-18,7.8) {$v_1$};

\node at(-16.4,3.6) {$v_2$};

\draw  [orange,thick](-18.5,7.1) -- (-17.2,4.5);
\draw [thick] (-18.5,7.1) -- (-20,4.5);
\draw  [orange,thick](-17.2,4.5) -- (-20,4.5);

\draw [thick](-18.5,6.3) -- (-18,5);
\draw  [thick](-18.5,6.3) -- (-19.2,5);
\draw  [thick](-18,5) -- (-19.2,5);

\draw [thick](-18,5) -- (-18.5,7.9);
\draw [thick] (-18.5,7.9) -- (-17.2,4.5);

\draw  [orange,thick](-20,4.5) -- (-16.4,4);
\draw  [orange,thick](-18.5,7.1) -- (-16.4,4);
\end{tikzpicture}
\end{minipage}
\caption{ Illustration of $G[V_{0}\cup \{v_{1}, v_{2}\}]$.}
\label{l9}
\end{center}
\end{figure}
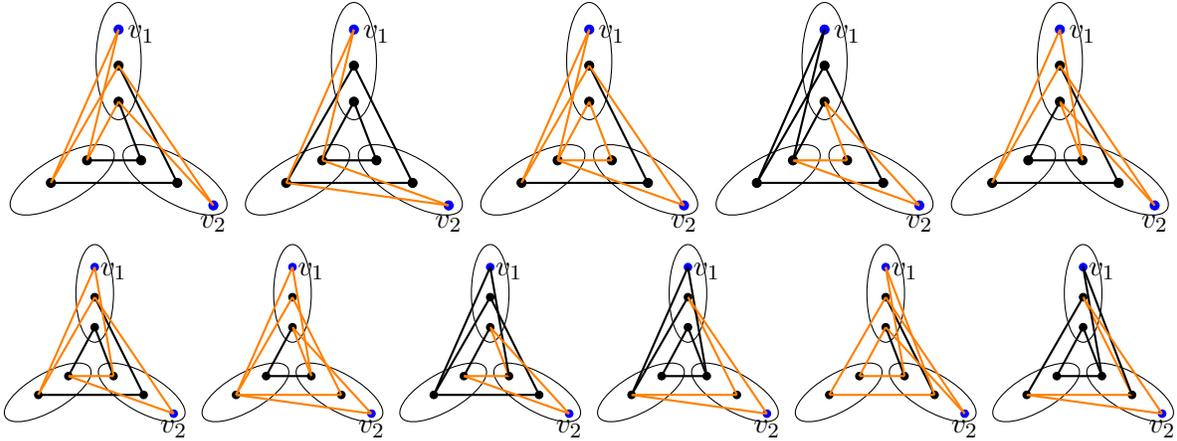
By Claim 5, we have $e(U,V_{0})\ge 2n_{1}+2n_{2}-1-e(G[V_{0}])=2n_{1}+2n_{2}-7$.
Since $|U|=n_{1}+n_{2}+n_{3}-6$ and $e(U,V_{0})\le |U_{0}|+|U|$, we have $|U_{0}|\ge n_{1}+n_{2}-n_{3}-1\ge n_{1}-1>n_{1}-2$. Thus, $U_{0}$ contains at least two vertices $v_{1}$ and $v_{2}$ which come from distinct parts. Then the orange edges in $G[V_{0}\cup \{v_{1}, v_{2}\}]$ (see Figure \ref{l9}) forms one subgraph in $\mathcal{F}$ (see Figure \ref{F}). By Lemma \ref{lemma2}, there exists a rainbow  $C_{4}^{\text{multi}}$, a contradiction.
\end{proof}


\vskip.2cm

\end{document}